\documentclass{amsart}

\usepackage{amsmath}
\usepackage{amssymb}

\newcommand{\forces}{\Vdash}
\newcommand{\bV}{{\bf V}}
\newcommand{\lesdot}{\mathrel{\mathord{<}\!\!\raise
0.8 pt\hbox{$\scriptstyle\circ$}}}

\newcommand{\cf}{{\rm cf}\/}
\newcommand{\can}{{}^{\textstyle \omega}2}
\newcommand{\fs}{{}^{\textstyle\omega{>}}2}

\newcommand{\lh}{\ell g\/}
\newcommand{\rest}{{\restriction}}
\newcommand{\dom}{{\rm dom}}

\newcommand{\conc}{{}^\frown\!}
\newcommand{\vtl}{\vartriangleleft}
\newcommand{\vare}{\varepsilon}

\newcommand{\qfor}{{{\mathbb Q}^*_\mu(\bar{\varphi})}}
\newcommand{\qforname}{{{\mathbb Q}^*_\mu(\name{\bar{\psi}})}}
\newcommand{\cohenk}{{{\mathbb C}_\kappa}}

\newcommand{\add}{{\rm add}}
\newcommand{\cof}{{\rm cof}}
\newcommand{\cofin}{{\rm cofin}}

\newcommand{\cA}{{\mathcal A}}

\newcommand{\cB}{{\mathcal B}}
\newcommand{\bbB}{{\mathbb B}}
\newcommand{\bbC}{{\mathbb C}}

\newcommand{\cI}{{\mathcal I}}

\newcommand{\cH}{{\mathcal H}}

\newcommand{\cN}{{\mathcal N}}
\newcommand{\cM}{{\mathcal M}}

\newcommand{\cP}{{\mathcal P}}
\newcommand{\bbP}{{\mathbb P}}
\newcommand{\bbQ}{{\mathbb Q}}

\newcommand{\bP}{{\mathbb P}}

\newcommand{\cS}{{\mathcal S}}
\newcommand{\gt}{{\mathfrak t}}
\newcommand{\cX}{{\mathcal X}}

\newcommand{\bbY}{{\mathbb Y}}
\newcommand{\cZ}{{\mathcal Z}}

\newcount\skewfactor
\def\mathunderaccent#1#2 {\let\theaccent#1\skewfactor#2
\mathpalette\putaccentunder}
\def\putaccentunder#1#2{\oalign{$#1#2$\crcr\hidewidth
\vbox to.2ex{\hbox{$#1\skew\skewfactor\theaccent{}$}\vss}\hidewidth}}
\def\name{\mathunderaccent\tilde-3 }

\newtheorem{theorem}{Theorem}[section]
\newtheorem{claim}{Claim}[theorem]
\newtheorem{lemma}[theorem]{Lemma}
\newtheorem{proposition}[theorem]{Proposition}
\newtheorem{corollary}[theorem]{Corollary}
\newtheorem{observation}[theorem]{Observation}

\theoremstyle{definition}
\newtheorem{problem}[theorem]{Problem}

\newtheorem{definition}[theorem]{Definition}

\theoremstyle{remark}

\newtheorem{context}[theorem]{Context}

\begin{document}

\title{Around {\tt cofin}}

\author{Andrzej Ros{\l}anowski}
\address{Department of Mathematics\\
 University of Nebraska at Omaha\\
 Omaha, NE 68182-0243, USA}
\email{roslanow@member.ams.org}
\urladdr{http://www.unomaha.edu/logic}

\author{Saharon Shelah}
\address{Institute of Mathematics\\
 The Hebrew University of Jerusalem\\
 91904 Jerusalem, Israel\\
 and  Department of Mathematics\\
 Rutgers University\\
 New Brunswick, NJ 08854, USA}
\email{shelah@math.huji.ac.il}
\urladdr{http://www.math.rutgers.edu/$\sim$shelah}

\thanks{Both authors acknowledge support from the United States-Israel
Binational Science Foundation (Grant no. 2010405). This is publication
1022 of the second author.}

\subjclass{Primary 03E17; Secondary: 03E35}
\date{May 31, 2013}

\begin{abstract}
  We show the consistency of ``there is a nice $\sigma$--ideal $\cI$
  on the reals with $\add(\cI)=\aleph_1$ which cannot be represented
  as the union of a strictly increasing sequence of length $\omega_1$
  of $\sigma$-subideals''. This answers Borodulin--Nadzieja and
  G{\l}\c{a}b \cite[Problem 6.2(ii)]{BnG11}.
\end{abstract}

\maketitle

\section{Introduction}
Borodulin--Nadzieja and G{\l}\c{a}b \cite{BnG11} studied generalizations
of the Mokobodzki ideal and they showed that those $\sigma$--ideals do not
have Borel bases of bounded Borel complexity. In \cite[Section 5]{BnG11}
they noticed that the unbounded Borel complexity of bases implies that the
additivity of the $\sigma$--ideal under consideration is $\aleph_1$. This
exposed the heart of Cicho\'{n} and Pawlikowski  \cite[Corollary
2.4]{CiPa86} and showed the influence of the existence of a strictly
increasing $\omega_1$--sequence of $\sigma$--subideals which add up to the
whole ideal.

Motivated by this, Borodulin--Nadzieja and G{\l}\c{a}b
introduced a new cardinal invariant $\cofin(\cI)$ associated with non-trivial
$\sigma$--ideals $\cI$: the minimal length of a strictly increasing sequence
of $\sigma$--subideals with union $\cI$ (see Definition
\ref{coefficients}). They showed that the additivity of the $\sigma$--ideal
$\cI$ is not larger than $\cofin(\cI)$ (see \cite[Proposition 5.2]{BnG11} or
Theorem \ref{BGclassic} here) and in \cite[Problem 6.2(ii)]{BnG11} they
asked if the two invariants can be different. In the present paper we answer
this question in positive.

In the second section we define the relevant cardinal invariants and we point
out situations when $\cofin(\cI)<\cof(\cI)$ for the meager and the null
ideals. In Section 3 we introduce a nicely definable $\sigma$--ideal $\cI_f$
with a Borel bases consisting of $\Pi^0_2$ sets. Then we show that,
consistently, $\add(\cI_f)=\aleph_1$ while $\cofin(\cI_f)=\aleph_2$
(Corollary \ref{mcor}).
\medskip

\noindent{\bf Notation}\qquad Most of our notation is standard and
compatible with that of classical textbooks (like Bartoszy\'nski and Judah
\cite{BaJu95}). However, in forcing we keep the older convention that {\em a
  stronger condition is the larger one}.  \medskip

$\bullet$ Ordinals will be denoted with initial letters of the Greek
alphabet ($\alpha$--$\zeta$) and integers (finite ordinals) will be
denoted by $i,j,k,\ell,m,n$. Letters $\kappa,\lambda,\mu$ will denote
uncountable cardinals.

$\bullet$ By a {\em sequence\/} we mean a function whose domain is a set of
  ordinals. Sequences will be denoted by letters
  $\eta,\nu,\rho,\sigma,\varsigma,\varphi,\psi$ (with possible indices) .

For two sequences $\eta,\nu$ we write $\nu\vtl\eta$ whenever $\nu$
is a proper initial segment of $\eta$, and $\nu \trianglelefteq\eta$
when either $\nu\vtl\eta$ or $\nu=\eta$.  The length of a sequence
$\eta$ is the order type of its domain and it is denoted by
$\lh(\eta)$.

$\bullet$ The power set of a set $X$ is denoted by $\cP(X)$ and the
collection of all subsets of $X$ of size $m$ is called $[X]^{\textstyle m}$
and the collection of all finite subsets of $X$ is denoted by
$[X]^{\textstyle {<}\aleph_0}$.

$\bullet$ The Cantor space $\can$ is the space of all functions from
$\omega$ to $2$, equipped with the product topology generated by sets of the
form $\{\eta\in\can:\nu\vartriangleleft\eta\}$ for $\nu\in\fs$.

$\bullet$ A family $\cI$ of subsets of $X$ which is closed under
finite unions and taking subsets is called an {\em ideal on $X$}. It
is a proper ideal if $X\notin \cI$ (i.e., $\cI\neq \cP(X)$) and it is
a $\sigma$--ideal if it is closed under countable unions.  The
$\sigma$--ideal of meager subsets of the Cantor space $\can$ is called $\cM$
and the $\sigma$--ideal of Lebesgue null sets is $\cN$.

$\bullet$ For a forcing notion $\bP$, all $\bP$--names for objects in the
extension via $\bP$ will be denoted with a tilde below (e.g.~$\name{A}$,
$\name{\eta}$). The canonical name for a $\bbP$--generic filter over $\bV$
is denoted $\name{G}_\bbP$. The Cohen forcing for adding $\kappa$ many Cohen
reals in $\can$ is called $\cohenk$ (so a condition in $\cohenk$ is a finite
function $p:\dom(p)\longrightarrow 2$ with $\dom(p)\subseteq \kappa\times
\omega$ and the order of $\cohenk$ is the inclusion). The forcing $\bbC$ is
$\bbC_1$.

\section{cofin and $\cM$, $\cN$}

\begin{definition}
\label{coefficients}
Let $\cI$ be an ideal on $X$. We define the following cardinal
characteristics of $\cI$:
\begin{enumerate}
\item $\add(\cI)=\min\{|\cA|:\cA\subseteq \cI\ \&\
  \bigcup\cA\notin \cI\}$;
\item $\cof(\cI)=\min\{|\cB|:\cB\subseteq \cI\ \&\
  (\forall A\in \cI)(\exists B\in\cB)(A\subseteq B)\}$;
\item $\cofin(\cI)$ is the minimal limit ordinal $\gamma$ for which
  there exists a sequence $\bar{\cI}=\langle\cI_\alpha:\alpha<
\gamma\rangle$ such that
  \begin{enumerate}
\item[(a)] $\cI=\bigcup\limits_{\alpha<\gamma} \cI_\alpha$ and
\item[(b)] $\cI_\alpha\subsetneq \cI_\beta$ for $\alpha<\beta<\gamma$,
  and
\item[(c)] each $\cI_\alpha$ is a $\sigma$--ideal,
\end{enumerate}
(or $\infty$ if there is no sequence $\bar{\cI}$ as above);
\item $\cofin^-(\cI)$ and $\cofin^*(\cI)$ are defined similarly to
  $\cofin(\cI)$, but clause (c) is replaced by (c)$^-$ and
  (c)$^*$, respectively, where
  \begin{enumerate}
\item[(c)$^-$] each $\cI_\alpha$ is an ideal;
\item[(c)$^*$] each $\cI_\alpha$ is closed under taking subsets (i.e.,
  $B\subseteq A\in\cI_\alpha$ implies $B\in\cI_\alpha$);
\end{enumerate}
\item $\cofin^+(\cI)$ is the minimal limit ordinal $\gamma$ for
  which there exists a sequence $\langle\cI_\alpha:\alpha<\gamma\rangle$
  such that  clauses (a),(b) and (c) of (3) above are satisfied and
  \begin{enumerate}
\item[(d)] all singletons belong to $\cI_0$.
\end{enumerate}
\end{enumerate}
\end{definition}

If $\cI$ is a non-principal ideal (i.e., $\cof(\cI)\geq\aleph_0$), then
$\cofin^-(\cI)$ is well defined and $\cofin^-(\cI)\leq\cof(\cI)$. To see
this, pick a basis $\{B_\zeta:\zeta<\cof(\cI)\}\subseteq \cI$ for $\cI$. Let
$\zeta_0$ be the first ordinal $\zeta\leq \cof(\cI)$ such that for some set
$B\in\cI$ every member of $\cI$ can be covered by finitely many elements
of $\{B_\vare:\vare<\zeta\}\cup\{B\}$. Necessarily, $\zeta_0$ is a limit
ordinal. Let $B^*\in\cI$ be such that $\{B_\vare:\vare<\zeta_0\}\cup\{B^*\}$
generates $\cI$, i.e., every set in $\cI$ can be covered by $B^*$ and
finitely many sets $B_\vare$ with $\vare<\zeta_0$. For $\zeta<\zeta_0$ let
$\cI_\zeta$ be the ideal generated by $\{B_\vare:\vare<\zeta\}\cup
\{B^*\}$. Then $\cI=\bigcup\limits_{\zeta<\zeta_0}\cI_\zeta$ and, by the
minimality of $\zeta_0$, the sequence $\langle\cI_\zeta:\zeta<\zeta_0
\rangle$ does not stabilize. Consequently, we may choose an increasing
sequence $\langle \zeta_\alpha:\alpha<\cf(\zeta_0)\rangle$ cofinal in
$\zeta_0$ and such that $\langle\cI_{\zeta_\alpha}:\alpha<\cf(\zeta_0)
\rangle$ is a strictly increasing sequence of ideals with the union $\cI$.

Replacing above ``ideal'' with ``$\sigma$-ideal'' and ``finitely many'' with
``countably many'' we will get an argument showing that $\cofin(\cI)$ is
well defined for a $\sigma$--ideal $\cI$ and $\cofin(\cI)\leq\cof(\cI)$.

The cardinal invariant $\cofin$ was introduced by Borodulin--Nadzieja
and G{\l}\c{a}b in \cite[Section 5]{BnG11}. They showed that, for a
non--trivial $\sigma$--ideal $\cI$, $\cofin(\cI)$ is a well defined regular
cardinal and that the following inequalities are satisfied.

\begin{theorem}
[Borodulin--Nadzieja and G{\l}\c{a}b {\cite[Section 5]{BnG11}}]
\label{BGclassic}
Let $\cI$ be a non-principal $\sigma$-ideal of subsets of $X$. Then
\[\add(\cI)\leq \cofin^*(\cI)\leq \cofin^-(\cI)\leq \cofin(\cI)\leq
\cof(\cI).\]
\end{theorem}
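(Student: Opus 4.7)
The rightmost inequality $\cofin(\cI)\le\cof(\cI)$ is essentially proved in the discussion immediately preceding the theorem: the argument given there, with ``ideal'' replaced by ``$\sigma$-ideal'' and ``finitely many'' by ``countably many'', produces a strictly increasing chain of $\sigma$-ideals of length at most $\cof(\cI)$ whose union is $\cI$. The two middle inequalities, $\cofin^*(\cI)\le\cofin^-(\cI)\le\cofin(\cI)$, are immediate from the definitions: every $\sigma$-ideal is an ideal, and every ideal (in the sense used here, i.e., a family closed under finite unions and taking subsets) already satisfies clause (c)$^*$. Thus any witness for $\cofin(\cI)$ is also a witness for $\cofin^-(\cI)$, and any witness for the latter is a witness for $\cofin^*(\cI)$.

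The only substantive inequality is therefore $\add(\cI)\le\cofin^*(\cI)$, which I would prove by a diagonal argument. Suppose toward a contradiction that $\gamma:=\cofin^*(\cI)<\add(\cI)$, and let $\langle\cI_\alpha:\alpha<\gamma\rangle$ be a witness satisfying clauses (a), (b), (c)$^*$. Since the chain is strictly increasing, for each $\alpha<\gamma$ we may pick some $A_\alpha\in\cI_{\alpha+1}\setminus\cI_\alpha$. Set $A=\bigcup_{\alpha<\gamma}A_\alpha$. As $|\gamma|<\add(\cI)$ and each $A_\alpha\in\cI$, the union $A$ also belongs to $\cI$, so $A\in\cI_\beta$ for some $\beta<\gamma$. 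Because $\gamma$ is a limit ordinal, fix an $\alpha$ with $\beta<\alpha<\gamma$. Then $A_\alpha\subseteq A\in\cI_\beta$, and closure of $\cI_\beta$ under subsets together with $\cI_\beta\subseteq\cI_\alpha$ forces $A_\alpha\in\cI_\alpha$, contradicting the choice of $A_\alpha$.

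There is no genuine obstacle; the only point that deserves note is that since $\gamma<\add(\cI)$ and $\add(\cI)$ is a cardinal, we indeed have $|\gamma|<\add(\cI)$, which is exactly what is needed for the union $A$ to belong to $\cI$. Everything else is pigeonhole together with the defining closure properties of the $\cI_\alpha$'s.
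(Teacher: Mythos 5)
The paper does not give its own proof of this theorem: it cites Borodulin--Nadzieja and G\l\k{a}b directly, and only the last inequality $\cofin(\cI)\leq\cof(\cI)$ is sketched in the discussion preceding the statement (via the $\cofin^-$ argument with ``finitely many'' replaced by ``countably many''). So there is nothing in the text to compare the core of your argument against; what you have written is a correct and self-contained reconstruction. The two middle inequalities are indeed immediate from the nesting of the defining clauses, given that the paper's notion of ideal already includes closure under subsets. Your diagonal argument for $\add(\cI)\leq\cofin^*(\cI)$ is sound: picking $A_\alpha\in\cI_{\alpha+1}\setminus\cI_\alpha$, the union $A$ of fewer than $\add(\cI)$ members of $\cI$ lies in $\cI$, hence in some $\cI_\beta$, and since $\gamma$ is a limit one finds $\alpha>\beta$ with $A_\alpha\subseteq A\in\cI_\beta\subseteq\cI_\alpha$, contradicting closure of $\cI_\alpha$ under subsets; the remark that $\gamma<\add(\cI)$ implies $|\gamma|<\add(\cI)$ because $\add(\cI)$ is a cardinal is exactly the right hygiene. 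One small addendum you could make explicit: each $\cofin$-type invariant, when well-defined, is automatically a regular cardinal, since restricting a strictly increasing witnessing sequence to a cofinal subsequence preserves all the clauses; this is why treating $\gamma$ as a cardinal is harmless from the start.
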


\begin{proposition}
\label{CohRan}
Let $\kappa=\kappa^{\aleph_0}$ be an uncountable cardinal.
  \begin{enumerate}
  \item The Cohen algebra $\bbC_\kappa$ for adding $\kappa$ many Cohen
    reals forces that
\[\add(\cM)=\cofin(\cM)=\cofin^+(\cM)=\aleph_1\leq
\cof(\cM)= \kappa=2^{\aleph_0}.\]
\item The Solovay algebra $\bbB_\kappa$ for adding $\kappa$ many
  random  reals forces that
\[\add(\cN)=\cofin(\cN)=\cofin^+(\cN)=\aleph_1\leq
\cof(\cN)= \kappa=2^{\aleph_0}.\]
  \end{enumerate}
\end{proposition}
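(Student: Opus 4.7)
Parts (1) and (2) are parallel by the classical category--measure duality (swap $\bbC_\kappa$ for $\bbB_\kappa$, $\cM$ for $\cN$, and the Luzin set of Cohen reals for the Sierpi\'nski set of random reals), so I focus on (1). Throughout, let $G$ be $\bbC_\kappa$-generic over $\bV$.

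The equalities $\add(\cM)=\aleph_1$ and $\cof(\cM)=\kappa=2^{\aleph_0}$ in $\bV[G]$ are classical computations in the Cicho\'n diagram (see Bartoszy\'nski--Judah \cite{BaJu95}). Combined with Theorem~\ref{BGclassic} and the immediate inequalities $\cofin(\cM)\leq\cofin^+(\cM)\leq\cof(\cM)$ (the first because any $\cofin^+$-witness is a $\cofin$-witness; the second by augmenting a $\cof$-witness with the $\sigma$-ideal of countable subsets and extracting a strictly increasing subchain), we obtain $\aleph_1=\add(\cM)\leq\cofin(\cM)\leq\cofin^+(\cM)\leq\kappa$. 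Hence the entire nontrivial task is to establish $\cofin^+(\cM)\leq\aleph_1$, i.e., to produce a strictly $\subseteq$-increasing sequence $\langle\cI_\alpha:\alpha<\omega_1\rangle$ of $\sigma$-subideals of $\cM$ in $\bV[G]$, each containing all singletons, with $\bigcup_{\alpha<\omega_1}\cI_\alpha=\cM$.

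The construction exploits the Luzin property of $L=\{\name{c}^G_\xi:\xi<\kappa\}$: by ccc, any meager Borel set $M$ in $\bV[G]$ has its Borel code in $\bV[G\rest S]$ for some $S\in[\kappa]^{\aleph_0}$, and then $c_\xi\notin M$ for $\xi\notin S$, giving $|M\cap L|\leq\aleph_0$. Fix a continuous $\subseteq$-increasing chain $\langle F_\alpha:\alpha<\omega_1\rangle$ of subsets of $\kappa$ with $|F_\alpha|<\kappa$ such that every countable $S\subseteq\kappa$ is eventually contained in some $F_\alpha$, and let $\cI_\alpha$ collect those subsets of $\can$ covered by a meager Borel set whose code lies in $\bV[G\rest F_\alpha]$. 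Since $\bV\models\kappa^{\aleph_0}=\kappa$, each intermediate model $\bV[G\rest F_\alpha]$ carries an $\aleph_1$-cofinal chain of meager Borel codes, which lets any $\bV[G]$-countable union from $\cI_\alpha$ be absorbed inside a single $\cI_\alpha$-cover and confirms that $\cI_\alpha$ is a $\sigma$-ideal in $\bV[G]$. The required properties now follow: singletons lie in $\cI_0$ because $\bV[G\rest F_0]$ already contains all necessary countable sets; strict increase at stage $\alpha$ is witnessed by the singleton $\{c_\xi\}$ for any $\xi\in F_{\alpha+1}\setminus F_\alpha$; and $\bigcup_\alpha\cI_\alpha=\cM$ follows from the cofinality hypothesis on $\bar F$ together with the ccc-countable support of Borel codes in $\bV[G]$.

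The main obstacle is arranging the cofinality condition on $\bar F$ in the case $\cf(\kappa)>\aleph_1$, where no $\omega_1$-chain of proper subsets of $\kappa$ can be $\subseteq$-cofinal in $[\kappa]^{\aleph_0}$. In this regime the construction must be recast so that $\cI_\alpha$ is parametrized not by the sets $F_\alpha$ themselves, but by a $\sigma$-directed substructure extracted from the Luzin trace $M\cap L$ using $\kappa^{\aleph_0}=\kappa$; this technical adjustment, bypassing the $[\kappa]^{\aleph_0}$-cofinality barrier, is the delicate heart of the proof.
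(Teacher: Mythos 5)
Your proposal contains two genuine gaps, either of which is fatal.

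\textbf{The strict-increase witness is wrong.} You propose witnessing $\cI_{\alpha+1}\setminus\cI_\alpha\neq\emptyset$ by the singleton $\{c_\xi\}$ for $\xi\in F_{\alpha+1}\setminus F_\alpha$. But the quantity you must bound is $\cofin^+$, which by clause (d) of Definition~\ref{coefficients} requires every singleton to lie already in $\cI_0$. Your stated definition of $\cI_\alpha$ (sets covered by a meager Borel set coded in $\bV[G\rest F_\alpha]$) does \emph{not} contain $\{c_\xi\}$ for $\xi\notin F_\alpha$, since $c_\xi$ is Cohen over $\bV[G\rest F_\alpha]$ and hence avoids every meager set coded there; so your assertion ``singletons lie in $\cI_0$'' is simply false. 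And if you repair it by throwing all singletons into each $\cI_\alpha$ (as the paper does), then $\{c_\xi\}\in\cI_\alpha$ trivially and your witness evaporates. You need a \emph{thick} witness: the paper exhibits a meager (resp.\ null) Borel set $B$ built from the single generic real $r_{\alpha_\varepsilon}$ sitting at the bottom of the next block (for the null case, $B=\{x: (\forall n)\,x(2n)=r_{\alpha_\varepsilon}(2n)\}$), and then observes that $B$ cannot be covered by countably many Borel sets coded in the smaller intermediate model together with countably many points, since a ground-model translate $x^*+r_{\alpha_\varepsilon}$ lands in $B$ while staying random/Cohen over the intermediate model and missing the prescribed countable set.

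\textbf{The filtration is set up with the wrong constraint.} You insist on $|F_\alpha|<\kappa$ and on the chain being eventually above every countable subset of $\kappa$. As you yourself note, this is impossible once $\cf(\kappa)>\aleph_1$ (and it is already delicate for $\cf(\kappa)=\aleph_1$); you then gesture at a ``delicate'' repair and do not supply it. The fix is to drop the smallness requirement entirely: the paper partitions $\kappa$ into $\omega_1$ blocks $K_\varepsilon$ \emph{each of size $\kappa$} and sets $A_\varepsilon=\bigcup_{\zeta<\varepsilon}K_\zeta$, so the $A_\varepsilon$ already have size $\kappa$ from stage $1$ on, yet every countable $S\subseteq\kappa$ meets only countably many blocks and hence sits inside some $A_\varepsilon$ (a countable supremum of ordinals $<\omega_1$). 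Strict increase is then not bought by growing the index set but by the thick witness $B$ described above, which is exactly why the smallness of $F_\alpha$ was never the right lever. In short: your plan tries to make the intermediate models ``visibly strictly growing in size'' and declares victory with a singleton, whereas the actual proof keeps all the intermediate models of full size and produces a strictly growing chain of $\sigma$-ideals by a targeted random/Cohen-real argument.
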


\begin{proof}
(2)\quad In both cases the proof is essentially the same, so let us argue
for the Solovay algebra only. Represent $\kappa$ as the disjoint union
$\kappa=\bigcup\limits_{\vare<\omega_1} K_\vare$ where each $K_\vare$ is of
size $\kappa$. For $\vare<\omega_1$ set $\alpha_\vare=\min(K_\vare)$ and
$A_\vare=\bigcup\limits_{\zeta<\vare} K_\zeta$.

Suppose that $\bar{r}=\langle r_\alpha:\alpha<\kappa\rangle$ is a
$\bbB_\kappa$--generic over $\bV$, so $r_\alpha\in\can$ are random
reals, and let us argue in $\bV[\bar{r}]$. For each $\vare<\omega_1$ let
$\cI_\vare$ be the $\sigma$--ideal generated by singletons and the
family of all Borel null sets coded in $\bV[r_\alpha:\alpha\in A_\vare]$.
Then $\langle \cI_\vare:\vare<\omega_1\rangle$ is an increasing sequence of
$\sigma$--ideals, $\cI_0$ contains all singletons and $\cN=
\bigcup\limits_{\vare<\omega_1} \cI_\vare$. Moreover,  for each
$\vare<\omega_1$,
\[B\stackrel{\rm def}{=}\big\{x\in \can:(\forall n<\omega)(x(2n)=
r_{\alpha_\vare}(2n))\big\} \in \cI_{\vare+1}\setminus \cI_\vare.\]
Why? Clearly, $B$ is a Borel null set coded in $\bV[r_\alpha:\alpha\in
A_{\vare+1}]$, so $B\in \cI_{\vare+1}$. Suppose $B_i$ are Borel null sets
coded in $\bV[r_\alpha:\alpha\in A_\vare]$ and $x_i\in(\can)^{\bV[\bar{r}]}$
(for $i<\omega$). Choose $x^*\in\{x\in\can\cap\bV: (\forall
n<\omega)(x(2n)=0)\}\setminus\{x_i+r_{\alpha_\vare}:
i<\omega\}$. Then $x^*+r_{\alpha_\vare}$ is a random real over
$\bV[r_\alpha:\alpha\in A_\vare]$, so $x^*+r_{\alpha_\vare}\in B\setminus
(\bigcup\limits_{i<\omega} B_i\cup\{x_i:i<\omega\})$. Thus we may conclude
that $B\notin \cI_\vare$.
\end{proof}

\begin{definition}
[Ros{\l}anowski and Shelah {\cite[Definition 3.4]{RoSh:972}}]
\label{klbase}
Let $\cI$ be an ideal of subsets of a space $X$ and $\alpha^*,\beta^*$
be limit ordinals. {\em An $\alpha^*\times\beta^*$--base for $\cI$} is
an indexed family $\cB=\{ B_{\alpha,\beta}:\alpha<\alpha^*\ \&\ \beta <
\beta^*\}$ of sets  from $\cI$ such that
\begin{enumerate}
\item[(i)] $\cB$ is a basis for $\cI$, i.e., $(\forall A\in\cI)(\exists B\in
  \cB)(A\subseteq B)$, and
\item[(ii)] for each $\alpha_0,\alpha_1<\alpha^*$,
  $\beta_0,\beta_1<\beta^*$ we have
\[B_{\alpha_0,\beta_0}\subseteq B_{\alpha_1,\beta_1}\quad \Leftrightarrow\quad
\alpha_0\leq \alpha_1\ \&\ \beta_0\leq\beta_1.\]
\end{enumerate}
\end{definition}

If follows from results of Bartoszy\'nski and Kada \cite{BaKa05} (for
the meager ideal) and Burke and Kada \cite{BuKa04} (for the null
ideal) that for any cardinals $\kappa$ and $\lambda$ of uncountable
cofinality we may force that $\cM$ has a $\kappa\times\lambda$--basis,
and we may also force that $\cN$ has a $\kappa\times
\lambda$--basis. In \cite[Theorem 3.7]{RoSh:972} we constructed a
model in which both ideals have $\kappa\times\lambda$--bases.

\begin{proposition}
\label{basegivescofin}
Let $\kappa,\lambda$ be regular uncountable cardinals, $\kappa\leq
\lambda$,
\begin{enumerate}
\item If $\cI$ is a $\sigma$--ideal on a space $X$ and $\cI$ has a
  $\kappa\times\lambda$--base, then
\[\kappa=\add(\cI)=\cofin(\cI)\quad \mbox{ and }\quad \cof(\cI)=
\lambda.\]
\item There is a ccc forcing notion $\bbP$ forcing that $2^{\aleph_0}
=\lambda^{\aleph_0}$ and
\begin{enumerate}
\item[(i)] the $\sigma$--ideal $\cN$ has a $\kappa\times
\lambda$--base $\{ A_{\alpha,\beta}:\alpha<\kappa,\ \beta<\lambda\}$ with
the property that
\[\alpha_0>\alpha_1\ \vee\ \beta_0>\beta_1\quad \Rightarrow\quad
|A_{\alpha_0,\beta_0}\setminus A_{\alpha_1,\beta_1}|=2^{\aleph_0},\]
and
\item[(ii)] the $\sigma$--ideal $\cM$ has a $\kappa\times
\lambda$--base $\{ B_{\alpha,\beta}:\alpha<\kappa,\ \beta<\lambda\}$ with
the property that
\[\alpha_0>\alpha_1\ \vee\ \beta_0>\beta_1\quad \Rightarrow\quad
|B_{\alpha_0,\beta_0}\setminus B_{\alpha_1,\beta_1}|=2^{\aleph_0}.\]
\end{enumerate}
In particular,
\[\begin{array}{ll}
\forces_{\bbP} \mbox{``}&
\add(\cM)=\add(\cN)= \cofin^+(\cM)=\cofin^+(\cN)=\kappa\ \mbox{ and}\\
&\cof(\cM)=\cof(\cN)=\lambda.\mbox{ ''}\end{array}\]
\end{enumerate}
\end{proposition}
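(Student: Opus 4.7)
My plan is to treat (1) as an elementary consequence of the indexed base structure of Definition \ref{klbase}, and then to obtain (2) by first invoking (in a suitably strengthened form) the construction of \cite[Theorem 3.7]{RoSh:972} to produce bases with the extra big-difference clause, and afterwards reducing $\cofin^+$ to the argument of (1) plus a singleton-absorption trick driven by that clause.

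For (1), all three equalities flow from clause (ii) of the base together with the regularity of $\kappa$ and $\lambda$. The bound $\cof(\cI)\leq\lambda$ is immediate because $\cB$ itself is a basis of size $\lambda$. For the matching lower bound, given any family $\{A_i:i<\mu\}\subseteq\cI$ with $\mu<\lambda$, I dominate $A_i\subseteq B_{\alpha_i,\beta_i}$ and pick $\beta^*<\lambda$ above all $\beta_i$ (using regularity of $\lambda$); then (ii) yields $B_{0,\beta^*}\not\subseteq B_{\alpha_i,\beta_i}$ for each $i$, so $\{A_i\}$ is not a basis. The inequality $\add(\cI)\geq\kappa$ is symmetric: a union of fewer than $\kappa$ elements of $\cI$ is dominated by a single $B_{\alpha^*,\beta^*}$ with $\alpha^*<\kappa$, $\beta^*<\lambda$. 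For $\cofin(\cI)\leq\kappa$, I let $\cI_\alpha$ be the $\sigma$-ideal generated by $\{B_{\alpha',\beta}:\alpha'\leq\alpha,\beta<\lambda\}$. Clearly $\cI=\bigcup_{\alpha<\kappa}\cI_\alpha$. Strict increase at each successor uses (ii) again: any countable cover of $B_{\alpha+1,0}$ drawn from those generators has its $\beta$-coordinates bounded by some $\beta^*<\lambda$, so is dominated by the single element $B_{\alpha,\beta^*}$, which by (ii) cannot contain $B_{\alpha+1,0}$. Combined with $\add(\cI)\leq\cofin(\cI)$ from Theorem \ref{BGclassic}, this closes (1).

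For the forcing in (2), I would revisit the construction of \cite[Theorem 3.7]{RoSh:972}, which already delivers a ccc extension carrying $\kappa\times\lambda$-bases for both $\cN$ and $\cM$, and augment its bookkeeping to secure the big-difference clause. This strengthening is the main obstacle: I need to guarantee that whenever $\alpha_0>\alpha_1$ or $\beta_0>\beta_1$, generically $2^{\aleph_0}$ many new reals land in $A_{\alpha_0,\beta_0}\setminus A_{\alpha_1,\beta_1}$ (and analogously for the meager base). This ought to be arrangeable by threading, at each relevant index pair, enough generic reals that sit outside the smaller basis element while falling into the larger one, but the combinatorial accounting is the heart of the matter and the place where the argument departs from \cite{RoSh:972}.

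With such a $\bbP$ in hand, work in $\bV^\bbP$ and let $\cI\in\{\cN,\cM\}$. Part (1) immediately yields $\add(\cI)=\kappa$ and $\cof(\cI)=\lambda$. For $\cofin^+(\cI)\geq\kappa$ I use $\cofin^+(\cI)\geq\cofin(\cI)\geq\add(\cI)=\kappa$, with the first inequality immediate from the definitions (clause (d) is an extra constraint) and the second from Theorem \ref{BGclassic}. For the matching upper bound, I define $\cI_\alpha$ as the $\sigma$-ideal generated by $\{A_{\alpha',\beta}:\alpha'\leq\alpha,\beta<\lambda\}$ together with all singletons of $X$. Clause (d) of Definition \ref{coefficients}(5) is then trivially met, and $\bigcup_{\alpha<\kappa}\cI_\alpha=\cI$ because every set in $\cI$ is dominated by some basis element. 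Strict increase is where the big-difference property finally pays: if $A_{\alpha+1,0}\in\cI_\alpha$, then $A_{\alpha+1,0}\subseteq A_{\alpha,\beta^*}\cup C$ for some $\beta^*<\lambda$ and countable $C$, forcing $|A_{\alpha+1,0}\setminus A_{\alpha,\beta^*}|\leq\aleph_0$ and contradicting the chosen clause. Hence $\cofin^+(\cI)\leq\kappa$, and the meager case is identical.
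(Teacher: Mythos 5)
Your proof is correct and follows essentially the same approach as the paper: part (1) uses the identical construction of the $\sigma$--ideals $\cI_\alpha$ generated by $\{B_{\alpha',\beta}:\alpha'\leq\alpha,\ \beta<\lambda\}$ together with the successor-step argument via clause (ii) of Definition \ref{klbase}, and part (2) reduces, as the paper does, to citing the forcing of \cite[Theorem 3.7]{RoSh:972}. You usefully supply the explicit reduction of $\cofin^+(\cI)\le\kappa$ to the big-difference clause (a countable cover from $\cI_\alpha$ is dominated by a single $A_{\alpha,\beta^*}$ plus a countable set, contradicting $|A_{\alpha+1,0}\setminus A_{\alpha,\beta^*}|=2^{\aleph_0}$), which the paper leaves entirely implicit; you are merely more hesitant than the paper, which simply asserts that the $\bbQ^{\kappa,\lambda}$ of \cite{RoSh:972} already ``has the desired properties,'' about whether the big-difference clause comes for free from that construction.
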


\begin{proof}
(1)\quad   Assume that  $\{B_{\alpha,\beta}:\alpha< \kappa,\ \beta<
\kappa\}$  is a $\kappa\times\lambda$--base for $\cI$. It should be clear
that then $\kappa=\add(\cI)$ and $\cof(\cI)=\lambda$.

Let us argue that $\cofin(\cI)\leq \kappa$. For $\zeta<\kappa$ let
$\cI_\zeta$ be the $\sigma$--ideal generated by the family
$\{B_{\alpha,\beta}:\alpha \leq \zeta\ \&\ \beta<\lambda\}$. Plainly,
$\langle \cI_\zeta:\zeta<\kappa\rangle$ is an
increasing sequence of $\sigma$--ideals such that $\cI=
\bigcup\limits_{\zeta<\kappa} \cI_\zeta$. We claim that
$B_{\zeta+1,0}\in \cI_{\zeta+1}\setminus \cI_\zeta$. Suppose that
$I\subseteq (\zeta+1)\times\lambda$ is countable. Then we may choose
$\beta^*<\lambda$ such that $I\subseteq (\zeta+1)\times\beta^*$ and
consequently $\bigcup\{B_{\alpha,\beta}: (\alpha,\beta)\in I\}\subseteq
B_{\zeta,\beta^*}$. But $B_{\zeta+1,0}\nsubseteq B_{\zeta,\beta^*}$ and so
$B_{\zeta+1,0}\nsubseteq \bigcup\{B_{\alpha,\beta}:(\alpha,\beta)\in
I\}$. Now we may conclude now that $B_{\zeta+1,0}\notin \cI_\zeta$.
\medskip

\noindent (2)\quad The forcing notion $\bbQ^{\kappa,\lambda}$
constructed in the proof of \cite[Theorem 3.7]{RoSh:972} has the
desired properties.
\end{proof}

\section{cofin and $\cI_f$}
We introduce here a nicely definable Borel ideal $\cI_f$ for which,
consistently, $\add(\cI_f)<\cofin(\cI_f)$. The proof of the consistency will
reseamble Shelah \cite[Chapter II, Theorem 4.6]{Sh:f} (and thus also
\cite{Sh:98}). The respective forcing notion is obtained by means of FS
iteration of ccc forcing notions, however the iteration itself is forced
too.

\begin{context}
In this section we fix two strictly increasing functions
$f,g:\omega\longrightarrow \omega$ such that for each $n<\omega$ we have
\[2<g(n)<f(n)\quad\mbox{ and }\quad \frac{g(n)}{f(n)}\leq
\frac{1}{n+1}.\]
\end{context}

\begin{definition}
\label{sladef}
\begin{enumerate}
\item {\em A null slalom below $f$} is a function
$\varphi\in\prod\limits_{n<\omega} \cP\big(f(n)\big)$ such that
$\lim\limits_{n\to\infty} \frac{|\varphi(n)|}{f(n)}=0$.
\item Let $\cS_f$ be the collection of all null slaloms below $f$ and
  let $\cX_f=\prod\limits_{n<\omega} f(n)$ be equipped with the
  natural product topology (so $\cX_f$ is a Polish space).
\item For $\varphi\in\cS_f$ we define
\[[\varphi]=\big\{x\in \cX_f:\big(\exists^\infty n<\omega \big)
\big(x(n)\in\varphi(n) \big) \big\}.\]
\end{enumerate}
\end{definition}

\begin{observation}
\label{basobs}
Let $\varphi_i\in\cS_f$ (for $i<\omega$).
\begin{enumerate}
\item $[\varphi_0]\subseteq [\varphi_1]$ if and only if
  $\big(\forall^\infty n<\omega \big) \big(\varphi_0(n) \subseteq
  \varphi_1(n) \big)$.
\item There is $\psi\in\cS_f$ such that $\bigcup\limits_{i<\omega}
[\varphi_i]\subseteq [\psi]$.
\end{enumerate}
\end{observation}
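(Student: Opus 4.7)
The plan is to prove the two parts separately, with (1) providing the workhorse equivalence that then makes (2) easy to verify.

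For (1), the $(\Leftarrow)$ direction is immediate: if $\varphi_0(n)\subseteq\varphi_1(n)$ for $n\geq N$ and $x\in[\varphi_0]$, then infinitely many $n\geq N$ satisfy $x(n)\in\varphi_0(n)\subseteq\varphi_1(n)$, so $x\in[\varphi_1]$. For $(\Rightarrow)$ I would argue by contrapositive: suppose the set $N=\{n<\omega:\varphi_0(n)\not\subseteq\varphi_1(n)\}$ is infinite. Because $|\varphi_1(n)|/f(n)\to 0$, there is some $n^*$ such that $f(n)\setminus\varphi_1(n)\neq\emptyset$ for all $n\geq n^*$. Now define $x\in\cX_f$ by picking, for each $n\in N$, some $x(n)\in\varphi_0(n)\setminus\varphi_1(n)$; for each $n\notin N$ with $n\geq n^*$, some $x(n)\in f(n)\setminus\varphi_1(n)$; and for the finitely many remaining $n$, anything in $f(n)$. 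Then $x\in[\varphi_0]$ because $x(n)\in\varphi_0(n)$ for all $n\in N$, while $x(n)\notin\varphi_1(n)$ for all but finitely many $n$, so $x\notin[\varphi_1]$, which contradicts $[\varphi_0]\subseteq[\varphi_1]$.

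For (2), the strategy is to construct $\psi$ so that for each $i$, eventually $\varphi_i(n)\subseteq\psi(n)$; by part (1) this will force $[\varphi_i]\subseteq[\psi]$, and taking countable union of inclusions will give the claim. I would choose an increasing sequence $0=M_0<M_1<M_2<\ldots$ of integers as follows. Since for each $i$ we have $|\varphi_i(n)|/f(n)\to 0$, for every $\ell\geq 1$ there exists $M_\ell$ (and we can ensure $M_\ell>M_{\ell-1}$) such that for all $n\geq M_\ell$ and all $i\leq\ell$,
\[\frac{|\varphi_i(n)|}{f(n)}\;\leq\;\frac{1}{\ell\cdot 2^{i+1}}.\]
Now set $\psi(n)=\bigcup\{\varphi_i(n):i\leq\ell\}$ whenever $M_\ell\leq n<M_{\ell+1}$ (and $\psi(n)=\emptyset$ for $n<M_1$). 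Then for such $n$,
\[\frac{|\psi(n)|}{f(n)}\;\leq\;\sum_{i\leq\ell}\frac{1}{\ell\cdot 2^{i+1}}\;\leq\;\frac{1}{\ell},\]
so $|\psi(n)|/f(n)\to 0$ and $\psi\in\cS_f$. Moreover for every fixed $i$, as soon as $n\geq M_i$ we have $\varphi_i(n)\subseteq\psi(n)$, so part (1) yields $[\varphi_i]\subseteq[\psi]$.

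The proof is essentially routine; there is no serious obstacle. The only small piece requiring care is the choice of the thresholds $M_\ell$ in (2): one must interleave the quantifier ``for all $i\leq\ell$'' with a summable bound like $1/(\ell\cdot 2^{i+1})$ so that the union stays small, rather than naively using $|\varphi_i(n)|\leq f(n)/2^i$ at level $i$, which would only give $|\psi(n)|/f(n)\leq 2$ and not convergence to $0$.
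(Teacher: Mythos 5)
Your proof is correct. The paper states this as an Observation without giving a proof, and your argument is the natural routine one: part (1) via contrapositive (using that $|\varphi_1(n)|/f(n)\to 0$ to guarantee $f(n)\setminus\varphi_1(n)\neq\emptyset$ eventually), and part (2) by a standard diagonal choice of thresholds $M_\ell$ so that the block-union $\psi(n)=\bigcup_{i\le\ell}\varphi_i(n)$ for $n\in[M_\ell,M_{\ell+1})$ remains a null slalom; your remark about needing a summable bound such as $1/(\ell\cdot 2^{i+1})$ (rather than a fixed $1/2^i$) to drive $|\psi(n)|/f(n)$ to $0$ is exactly the right point of care.
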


\begin{definition}
\label{theideal}
Let $\cI_f$ be the $\sigma$--ideal of subsets of $\cX_f$ generated by
all sets $[\varphi]$ for $\varphi\in \cS_f$. Thus, by Observation
\ref{basobs},
\[\cI_f=\big\{A\subseteq \cX_f:\big(\exists \varphi\in \cS_f \big)
\big(A\subseteq [\varphi]\big) \big\}.\]
\end{definition}

We will construct a forcing notion $\bbP$ forcing that
$\add(\cI_f)<\cofin(\cI_f)$, but first  we need several technical
ingredients.

\begin{definition}
\label{ourCohen}
For a cardinal $\kappa$ we define a forcing notion $\bbQ_0^\kappa$:

\noindent{\em A condition $p$ in $\bbQ_0^\kappa$} is a finite function such
that $\dom(p)\subseteq \kappa$ and for some $n=n^p<\omega$, for all
$\vare\in\dom(p)$ we have $p(\vare)\in \prod\limits_{i<n}
[f(i)]^{\textstyle g(i)}$.

\noindent{\em The order $\leq=\leq_{\bbQ_0^\kappa}$ of $\bbQ_0^\kappa$} is
defined by\\
$p\leq q$ if and only if ($p,q\in\bbQ_0^\kappa$ and) $\dom(p)\subseteq
\dom(q)$ and $(\forall \vare\in\dom(p))(p(\vare)\trianglelefteq
q(\vare))$.
\medskip

For $\vare<\kappa$, a $\bbQ_0^\kappa$--name $\name{\nu}(\vare)$ is
defined by
\[\forces_{\bbQ_0^\kappa}\name{\nu}(\vare)=\bigcup\big\{p(\vare):\vare \in
\dom(p)\  \&\ p\in\name{G}_{\bbQ_0^\kappa}\big\}.\]
\end{definition}

\begin{observation}
\label{obsCoh}
\begin{enumerate}
\item The forcing notion $\bbQ_0^\kappa$ is equivalent to $\bbC_\kappa$,
  the forcing adding $\kappa$ many Cohen reals.
\item $\forces_{\bbQ_0^\kappa}\mbox{`` for every $\vare<\kappa$ we have
  }\name{\nu}(\vare)\in\prod\limits_{i<\omega}   [f(i)]^{\textstyle
    g(i)}\subseteq\cS_f$ ''.
\end{enumerate}
\end{observation}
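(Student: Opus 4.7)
The plan is to treat the two clauses of the observation separately, with clause (1) requiring slightly more care than clause (2).

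For clause (1), I would show that $\bbQ_0^\kappa$ and $\bbC_\kappa$ have isomorphic separative quotients by exhibiting a natural finite-support product decomposition of $\bbQ_0^\kappa$. A condition $p\in\bbQ_0^\kappa$ with $n^p=n$ and $\dom(p)=F$ may be canonically identified with an element of $\prod_{\vare\in F}\prod_{i<n}[f(i)]^{g(i)}$, and the order of $\bbQ_0^\kappa$ respects this identification. For each single coordinate $\vare$, the one-coordinate forcing $\bigcup_{n<\omega}\prod_{i<n}[f(i)]^{g(i)}$ ordered by end-extension is a countable atomless separative partial order, hence forcing-equivalent to $\bbC_1$. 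Since $\bbQ_0^\kappa$ is, up to this identification, the finite-support product of $\kappa$ such copies, and $\bbC_\kappa$ is by definition the finite-support product of $\kappa$ copies of $\bbC_1$, the two forcings are equivalent.

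For clause (2), a standard density argument suffices. For each $\vare<\kappa$ and each $n<\omega$, the set
\[D_{\vare,n}=\{p\in\bbQ_0^\kappa : \vare\in\dom(p)\text{ and }n^p\geq n\}\]
is dense: given an arbitrary $q\in\bbQ_0^\kappa$, one may first enlarge $\dom(q)$ to include $\vare$ (assigning an arbitrary value in $\prod_{i<n^q}[f(i)]^{g(i)}$) and then extend $n^q$ past $n$ by appending arbitrary values in $[f(i)]^{g(i)}$ for $n^q\leq i<n$. Meeting every $D_{\vare,n}$ with the generic filter ensures that $\name{\nu}(\vare)$ is defined on all of $\omega$, and the definition of the order guarantees it is a well-defined function with $\name{\nu}(\vare)(i)\in[f(i)]^{g(i)}$ for each $i$. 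The inclusion $\prod_{i<\omega}[f(i)]^{g(i)}\subseteq\cS_f$ is then immediate from the context hypothesis $g(i)/f(i)\leq 1/(i+1)$, which forces $|\varphi(i)|/f(i)\to 0$ for any $\varphi$ in this product.

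The only real pitfall I foresee is the precise formulation of ``equivalent'' in clause (1); but since any two countable atomless separative posets have isomorphic Boolean completions, and the finite-support product operation is determined by the complete Boolean algebras of the factors, the equivalence transports verbatim from the coordinate-wise identification to the full forcings. Everything else is bookkeeping.
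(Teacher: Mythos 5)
The paper states this Observation without proof, and your argument supplies the routine details correctly. One small imprecision in clause (1) is worth flagging: $\bbQ_0^\kappa$ is not literally the finite-support product of $\kappa$ copies of the one-coordinate poset $\bigcup_{n<\omega}\prod_{i<n}[f(i)]^{g(i)}$. In a genuine finite-support product each coordinate $p(\vare)$ may be a sequence of its own length, whereas a condition in $\bbQ_0^\kappa$ requires all the sequences $p(\vare)$ (for $\vare\in\dom(p)$) to share the common length $n^p$. What is true is that $\bbQ_0^\kappa$ is a \emph{dense} suborder of the finite-support product: given a product condition $q$, extend each $q(\vare)$ arbitrarily to the maximum of the various lengths, which is possible since each $[f(i)]^{g(i)}$ is nonempty. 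Since a dense embedding induces an isomorphism of Boolean completions, the rest of your Boolean-algebra argument goes through unchanged. Incidentally, you do not need separativity of the coordinate poset: any countable atomless partial order is forcing-equivalent to $\bbC$, so countability and atomlessness already suffice. Clause (2) is fine as written.
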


\begin{definition}
\label{forforsla}
Let $\mu$ be an infinite cardinal and $\bar{\varphi}=\langle
\varphi_\zeta:\zeta<\mu\rangle$ be a sequence of null slaloms below
$f$ (so $\varphi_\zeta\in\cS_f$ for $\zeta<\mu$). We define a
forcing notion $\qfor$:

\noindent{\em A condition in $\qfor$} is a tuple $p=(k^p,m^p,
u^p,\sigma^p)=(k,m,u,\sigma)$ such that
\begin{enumerate}
\item[(a)] $k,m<\omega$, $\emptyset\neq u\in [\mu]^{\textstyle
    {<}\aleph_0}$,  $\sigma \in\prod\limits_{i<k}\cP(f(i))$, and
\item[(b)] for each $\ell\geq k$ and $\zeta\in u$ we have
  $|\varphi_\zeta(\ell)| < \frac{f(\ell)}{m\cdot |u|}$.
\end{enumerate}

\noindent{\em The order $\leq=\leq_{\qfor}$ of $\qfor$} is defined
by\\
$p\leq q$ if and only if ($p,q\in\qfor$ and) $k^p\leq k^q$, $m^p\leq
m^q$, $u^p\subseteq u^q$, $\sigma^p\trianglelefteq \sigma^q$ and for
each $\ell\in [k^p,k^q)$ we have
\[|\sigma^q(\ell)|\leq \frac{f(\ell)}{m^p}\quad\mbox{ and }\quad
\bigcup\big\{\varphi_\zeta(\ell): \zeta\in u^p\big\}\subseteq
\sigma^q(\ell). \]

We also define a $\qfor$--name $\name{\varsigma}$ by
\[\forces_{\qfor}\name{\varsigma}=\bigcup\big\{\sigma^p: p\in
\name{G}_{\qfor} \big\}.\]
\end{definition}

\begin{proposition}
\label{forprop}
Let $\mu$ be an infinite cardinal and $\bar{\varphi}= \langle
\varphi_\zeta:\zeta<\mu\rangle\subseteq\cS_f$. Then $\qfor$ is a well
defined ccc forcing notion of size $\mu$ and
\[\forces_{\qfor}\mbox{`` } \name{\varsigma}\in\cS_f\ \&\
\bigcup_{\zeta<\mu} [\varphi_\zeta]\subseteq [\name{\varsigma}]\in
\cI_f\mbox{ ''.}\]
\end{proposition}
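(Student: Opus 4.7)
The plan is to verify, in turn: (a) $\qfor$ is non-empty and of size $\mu$; (b) $\qfor$ satisfies the countable chain condition; (c) the generic union $\name{\varsigma}$ belongs to $\cS_f$ and contains every $\varphi_\zeta$ in the sense of Definition \ref{sladef}. Non-emptiness is immediate from $\varphi_\zeta \in \cS_f$: given any finite nonempty $u\subseteq\mu$ and any $m<\omega$, pick $k$ so large that $|\varphi_\zeta(\ell)| < f(\ell)/(m|u|)$ for every $\ell\ge k$ and $\zeta\in u$, and take $\sigma=\emptyset$. Since a condition is determined by finite auxiliary data plus an element of $[\mu]^{<\aleph_0}$, $|\qfor|=\mu$.

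The main step is ccc. Starting from $\{p_\alpha:\alpha<\omega_1\}\subseteq\qfor$, I refine the index set in three successive passes. Stage 1 is the standard $\Delta$-system plus pigeonhole, producing an uncountable $A_1\subseteq\omega_1$ on which $k^{p_\alpha}=k$, $m^{p_\alpha}=m$, $\sigma^{p_\alpha}=\sigma$, $|u^{p_\alpha}|=n$ are all constant and $\{u^{p_\alpha}:\alpha\in A_1\}$ is a $\Delta$-system with root $u^*$. Stage 2 exploits that each $\varphi_\zeta$ is a null slalom and $u^{p_\alpha}$ is finite: there is a least $K_\alpha<\omega$ with $|\varphi_\zeta(\ell)|<f(\ell)/(m\cdot 2n)$ for every $\ell\ge K_\alpha$ and $\zeta\in u^{p_\alpha}$, and pigeonholing on $\alpha\mapsto K_\alpha\in\omega$ yields an uncountable $A_2\subseteq A_1$ on which $K_\alpha=K^*$ is constant. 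Stage 3 pigeonholes on the finitely many possible values of the tuple $\bigl\langle\bigcup\{\varphi_\zeta(\ell):\zeta\in u^{p_\alpha}\}:k\le\ell<K^*\bigr\rangle$ and picks an uncountable $A_3\subseteq A_2$ on which this tuple is constant.

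For any $\alpha\ne\beta$ in $A_3$, the quadruple $r=(K^*,m,u^{p_\alpha}\cup u^{p_\beta},\sigma')$ is a common extension of $p_\alpha$ and $p_\beta$, where $\sigma'$ extends $\sigma$ to $[0,K^*)$ by $\sigma'(\ell)=\bigcup\{\varphi_\zeta(\ell):\zeta\in u^{p_\alpha}\}$ on $[k,K^*)$; the stage 3 refinement makes this equal to the analogous union over $u^{p_\beta}$. Clause (b) for $r$ is verified at levels $\ell\ge K^*$ via $|u^{p_\alpha}\cup u^{p_\beta}|\le 2n$ together with the defining property of $K^*$, while the order bound $|\sigma'(\ell)|\le f(\ell)/m$ on $[k,K^*)$ follows from clause (b) for $p_\alpha$ alone, which gives $|\sigma'(\ell)|<n\cdot f(\ell)/(mn)=f(\ell)/m$. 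The genuine obstacle is precisely the middle range $[k,K^*)$: here the naive $\Delta$-system argument leaves $V_\alpha(\ell)\cup V_\beta(\ell)$ potentially as large as $2f(\ell)/m$, and the stage 3 pigeonhole, which forces $V_\alpha(\ell)=V_\beta(\ell)$, is what makes the bound go through.

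For the forcing statement, three density facts suffice. Given $p=(k,m,u,\sigma)$ and any $k_0,m_0<\omega$ and $\zeta_0<\mu$, choose $k'\ge\max(k,k_0)$ so large that $|\varphi_\eta(\ell)|<f(\ell)/(\max(m,m_0)\cdot|u\cup\{\zeta_0\}|)$ for all $\ell\ge k'$ and $\eta\in u\cup\{\zeta_0\}$, and let $\sigma'$ extend $\sigma$ to $[0,k')$ by $\sigma'(\ell)=\bigcup\{\varphi_\eta(\ell):\eta\in u\}$ on $[k,k')$; clause (b) for $p$ makes the latter set have size $<f(\ell)/m$, so $(k',\max(m,m_0),u\cup\{\zeta_0\},\sigma')\ge p$ lies simultaneously in $\{q:k^q\ge k_0\}$, $\{q:m^q\ge m_0\}$ and $\{q:\zeta_0\in u^q\}$. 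The first two densities force $\name{\varsigma}$ to be a total function on $\omega$ with $\name{\varsigma}(\ell)\subseteq f(\ell)$ and $|\name{\varsigma}(\ell)|/f(\ell)\to 0$, i.e., $\name{\varsigma}\in\cS_f$. The third density together with the clause $\bigcup\{\varphi_\eta(\ell):\eta\in u^p\}\subseteq\sigma^q(\ell)$ in the order relation gives, for every $\zeta<\mu$, $\varphi_\zeta(\ell)\subseteq\name{\varsigma}(\ell)$ for all sufficiently large $\ell$, whence $[\varphi_\zeta]\subseteq[\name{\varsigma}]$; and $[\name{\varsigma}]\in\cI_f$ by Definition \ref{theideal}.
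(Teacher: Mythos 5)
Your proposal is correct and follows essentially the same route as the paper: the ccc argument is the standard $\Delta$-system/pigeonhole refinement, where the key observation (paper's $\rho^\vare$, your stages 2--3) is to extend each $\sigma^{p_\alpha}$ to a common length $K^*$ by $\bigcup_{\zeta\in u^{p_\alpha}}\varphi_\zeta(\ell)$ and then pigeonhole to make this extension constant, so that two conditions' ``middle ranges'' coincide; the forcing facts follow from the same three density observations. The only differences are presentational (you refine in three explicit passes where the paper bundles them into one constancy condition, and you fold the three density lemmas into one combined witness), so this is a match.
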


\begin{proof}
First note that if $p\in\qfor$ and $m=m^p$, $k=k^p+1$, $u=u^p$ and
$\sigma=\sigma^p\conc \langle \bigcup\limits_{\zeta\in u}
\varphi_\zeta(k^p)\rangle$, then $(k,m,u,\sigma)\in\qfor$ is a
condition stronger than $p$. Hence we may conclude that
$\forces_{\qfor} \name{\varsigma}\in\prod\limits_{i<\omega}\cP(f(i))$.

Also, if $p\in\qfor$ and $m>m^p$, then we may find $k>k^p$ such that
$|\varphi_\zeta(\ell)|<\frac{f(\ell)}{m\cdot |u^p|}$ for all $\zeta\in
u^p$ and $\ell\geq k$. Let $u=u^p$ and $\sigma\in \prod\limits_{i<k}
\cP(f(i))$ be such that $\sigma(\ell)=\sigma^p(\ell)$ for $\ell<k^p$
and $\sigma(\ell)=\bigcup\limits_{\zeta\in u} \varphi_\zeta(\ell)$
for $\ell\in [k^p,k)$. Then $(k,m,u,\sigma)\in\qfor$ is a condition
stronger than $p$ and it forces that $|\name{\varsigma}(\ell)|\leq
\frac{f(\ell)}{m}$ for all $\ell\geq k$. Hence we may conclude that
$\forces_{\qfor} \name{\varsigma}\in\cS_f$.

It follows from the definition of the order of $\qfor$ that
\[p\forces_{\qfor} \big(\forall\ell\geq k^p\big)\big(\forall\zeta\in
u^p\big)\big(\varphi_\zeta(\ell)\subseteq
\name{\varsigma}(\ell)\big),\]
and hence easily $\forces_{\qfor}\bigcup\limits_{\zeta<\mu}
[\varphi_\zeta] \subseteq [\name{\varsigma}]$.
\medskip

Let us argue now that  $\qfor$ satisfies the ccc. Suppose $\langle p_\vare:
\vare< \omega_1\rangle \subseteq \qfor$.  For each $\vare<\omega_1$, we  may
find $K^\vare>k^{p_\vare}$ such that
\begin{enumerate}
\item[$(\oplus)_1$] $\big (\forall \ell\geq K^\vare\big)
  \big(\forall\zeta\in u^{p_\vare} \big) \big(|\varphi_\zeta(\ell)|<
  \frac{f(\ell)}{2\cdot |u^{p_\vare}|\cdot m^{p_\vare}}\big)$
\end{enumerate}
and define $\rho^\vare\in \prod\limits_{i<K^\vare}\cP(f(i))$ so that
$\rho^\vare(\ell) =\sigma^{p_\vare}(\ell)$ for $\ell<k^{p_\vare}$
and $\rho^\vare(\ell)=\bigcup\limits_{\zeta\in u^{p_\vare}}
\varphi_\zeta(\ell)$ for $\ell\in [k^{p_\vare},K^\vare)$. Then we may find
an uncountable set $S\subseteq \omega_1$ and $K^*, m^*, \rho^*,\ell^*$ such
that for all $\vare\in S$:
\begin{enumerate}
\item[$(\oplus)_2$] $K^*=K^\vare$, $m^*=m^{p_\vare}$,  $\rho^*=\rho^\vare$
  and  $|u^\vare|=\ell^*$.
\end{enumerate}
Consider distinct $\vare_0,\vare_1\in S$: letting $u^*=u^{\vare_0}\cup
u^{\vare_1}$ we get a condition $(K^*,m^*,u^*,\rho^*)\in\qfor$ stronger
than both $p_{\vare_0}$ and $p_{\vare_1}$.
\end{proof}

\begin{definition}
\label{Ydef}
Let $\kappa<\lambda$ be uncountable regular cardinals.
\begin{enumerate}
\item A {\em Y--iteration for $\kappa,\lambda$} is a finite support
  iteration $\langle\bbP_\beta, \name{\bbQ}_\beta: \beta<\alpha\rangle$  of
  ccc forcing notions such that the following demands
  $(\otimes)_1$--$(\otimes)_3$ are satisfied.
\begin{enumerate}
\item[$(\otimes)_1$] $0<\alpha\leq \lambda$ and
  $\name{\bbQ}_0=\bbQ_0^\kappa$ is the forcing notion adding $\kappa$ Cohen
  reals as represented in Definition \ref{ourCohen} with
  $\bbQ_0^\kappa$--names $\name{\nu}(\vare)$ (for  $\vare<\kappa$) as
  defined there.
\item[$(\otimes)_2$] For each $\beta<\alpha$ we have
  $\forces_{\bbP_\beta} |\name{\bbQ}_\beta|\leq\lambda$.
\item[$(\otimes)_3$] Let $n<\omega$. Suppose that $\langle p_\zeta:
  \zeta<\kappa\rangle\subseteq\bbP_\alpha$ and  $\langle\delta_\zeta:
  \zeta<\kappa\rangle \subseteq \kappa$ and $\delta_\zeta\neq
  \delta_{\zeta'}$ for $\zeta<\zeta'<\kappa$. Then there are
  $q\in\bbP_\alpha$, $m>n$, $v\subseteq\kappa$, and $A_\zeta$ (for
  $\zeta\in v$) such that
\begin{enumerate}
\item[(i)] $|v|\geq\frac{f(m)}{2\cdot g(m)}$,
\item[(ii)] $p_\zeta\leq q$ for all $\zeta\in v$,
\item[(iii)] $A_\zeta\in [f(m)]^{\textstyle g(m)}$ (for $\zeta\in
  v$) are pairwise disjoint sets,
\item[(iv)] $q\forces_{\bbP_\alpha}\mbox{`` }\big(\forall \zeta\in
  v\big)\big(\name{\nu}(\delta_\zeta)(m)= A_\zeta\big)\mbox{ ''}$.
\end{enumerate}
\end{enumerate}
\item The collection of  all Y--iterations for $\kappa,\lambda$ of length
  ${<}\lambda$ which belong to $\cH(\beth_\lambda^+)$  is denoted by
  $\bbY^\lambda_\kappa$. It is ordered by the end-extension of sequences
  $\trianglelefteq$.
\end{enumerate}
\end{definition}

The condition \ref{Ydef}(1)$(\otimes)_3$ implies that the null slaloms added
at the first step of a Y--iteration provide a family of sets whose union is
not included in any null slalom. Note that in \ref{Ydef}$(\otimes)_3$
necessarily $|v|\leq \frac{f(m)}{g(m)}$.

\begin{lemma}
\label{cruciallemma}
Assume $\kappa<\lambda$ are regular uncountable cardinals. Suppose that
$\langle\bbP_\beta,\name{\bbQ}_\beta: \beta<\alpha\rangle$ is a
Y--iteration for $\kappa,\lambda$. Then $\forces_{\bbP_\alpha} \add(\cI_f)
\leq\kappa$.
\end{lemma}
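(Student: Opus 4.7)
The plan is to exhibit $\kappa$ sets in $\cI_f$ whose union is not in $\cI_f$, namely the traces $[\name{\nu}(\vare)]$ of the null slaloms $\name{\nu}(\vare)$ ($\vare<\kappa$) added at stage $0$ of the iteration (clause $(\otimes)_1$ of Definition \ref{Ydef}(1)). Each $[\name{\nu}(\vare)]$ belongs to $\cI_f$ by Observation \ref{obsCoh}(2) and Definition \ref{theideal}, so, using the covering criterion of Observation \ref{basobs}(1), it suffices to show that in $\bV^{\bbP_\alpha}$ no single $\psi\in\cS_f$ satisfies $\name{\nu}(\vare)(n)\subseteq\psi(n)$ for all $\vare<\kappa$ and almost all $n<\omega$.

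I would argue by contradiction. Assume $p^*\in\bbP_\alpha$ and a name $\name{\psi}$ satisfy $p^*\forces\name{\psi}\in\cS_f$ and $p^*\forces\bigcup_{\vare<\kappa}[\name{\nu}(\vare)]\subseteq[\name{\psi}]$. Since $\name{\psi}$ is forced to be a null slalom, I would extend $p^*$ to some $p^{**}$ that decides an integer $n_0$ with $p^{**}\forces(\forall n\geq n_0)\big(|\name{\psi}(n)|<f(n)/3\big)$. Then, for each $\vare<\kappa$, pick $p_\vare\geq p^{**}$ and $n_\vare\in\omega$ with $p_\vare\forces(\forall n\geq n_\vare)(\name{\nu}(\vare)(n)\subseteq\name{\psi}(n))$. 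Regularity of $\kappa$ plus a pigeonhole over $\omega$ yields $S\in[\kappa]^\kappa$ and a single $n^*\geq n_0$ such that $n_\vare=n^*$ for all $\vare\in S$.

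Enumerating $S$ injectively as $\{\vare_\zeta:\zeta<\kappa\}$, apply clause $(\otimes)_3$ to $\langle p_{\vare_\zeta}:\zeta<\kappa\rangle$ with $\delta_\zeta=\vare_\zeta$ and parameter $n=n^*$. This produces $q\in\bbP_\alpha$, $m>n^*$, $v\subseteq\kappa$ with $|v|\geq f(m)/(2g(m))$, and pairwise disjoint $A_\zeta\in[f(m)]^{g(m)}$ ($\zeta\in v$) such that $q\geq p_{\vare_\zeta}$ and $q\forces\name{\nu}(\vare_\zeta)(m)=A_\zeta$ for every $\zeta\in v$. Since $m>n^*=n_{\vare_\zeta}$, $q$ forces $A_\zeta\subseteq\name{\psi}(m)$ for each $\zeta\in v$, so by disjointness
\[q\forces |\name{\psi}(m)|\geq |v|\cdot g(m)\geq \tfrac{f(m)}{2g(m)}\cdot g(m)=\tfrac{f(m)}{2},\]
while $m\geq n_0$ gives $q\forces|\name{\psi}(m)|<f(m)/3$, the desired contradiction. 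The only nontrivial point I anticipate is arranging the bookkeeping so that the subfamily surviving the pigeonhole still has size $\kappa$ (enabling $(\otimes)_3$); regularity of $\kappa$ together with the fact that the ``eventually'' bounds $n_\vare$ live in $\omega$ takes care of this, and then clauses (i) and (iii) of $(\otimes)_3$ combined with $\name{\psi}\in\cS_f$ deliver the arithmetic clash between $f(m)/2$ and $f(m)/3$.
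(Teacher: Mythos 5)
Your proof is correct and follows essentially the same line as the paper's: contradiction hypothesis that some $\name{\psi}\in\cS_f$ eventually covers all the $\name{\nu}(\vare)$, per-$\vare$ extraction of an ``eventually'' threshold $n_\vare$, pigeonhole over $\omega$ using regularity of $\kappa$, and then invocation of $(\otimes)_3$ to force $\gtrsim f(m)/2$ points into $\name{\psi}(m)$ against the slalom bound $<f(m)/3$. The only cosmetic difference is that the paper bundles both the containment $\name{\nu}(\vare)(n)\subseteq\name{\varphi}(n)$ and the ratio bound $|\name{\varphi}(n)|/f(n)<1/4$ into a single integer $n_\vare$ decided by $p_\vare$, whereas you first fix one condition $p^{**}$ deciding a global threshold $n_0$ for the ratio bound and then extend per $\vare$; the arithmetic and the application of $(\otimes)_3$ are identical.
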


\begin{proof}
We know that for each $\vare<\kappa$ we have $\forces_{\bbP_\alpha}
\name{\nu}(\vare)\in \cS_f$ (remember Observation \ref{obsCoh}) and
therefore $\forces_{\bbP_\alpha}\{[\name{\nu}(\vare)]:\vare<\kappa\}
\subseteq \cI_f$. We are going to argue that
\[\forces_{\bbP_\alpha}\bigcup\big\{[\name{\nu}(\vare)]:\vare<\kappa
\big\} \notin \cI_f.\]
Suppose towards contradiction that this is not the case. Then we may
pick $p\in\bbP_\alpha$ and a $\bbP_\alpha$--name $\name{\varphi}$ such that
\[p\forces_{\bbP_\alpha}\name{\varphi}\in\cS_f\ \&\ \big(\forall
\vare<\kappa\big) \big(\forall^\infty n<\omega\big)\big( \name{\nu}
(\vare)(n)\subseteq\name{\varphi}(n)\big)\]
(remember Observation \ref{basobs}). Now for each $\vare<\kappa$
we pick a condition $p_\vare\geq p$ and an integer $n_\vare<\omega$
such that
\[p_\vare\forces_{\bbP_\alpha}\big(\forall n\geq n_\vare\big)\big(
\name{\nu} (\vare)(n)\subseteq\name{\varphi}(n)\ \&\
\frac{|\name{\varphi}(n)|}{f(n)} <1/4\big).\]
For some $n^*<\omega$ the set $S=\{\vare<\kappa: n_\vare=n^*\}$ is
of size $\kappa$. Apply \ref{Ydef}(1)$(\otimes)_3$ to $\langle p_\vare:
\vare\in S\rangle\subseteq\bbP_\alpha$ and $\langle \vare:\vare\in S\rangle
\subseteq\kappa$ and $n=n^*$ to find $q\in\bbP_\alpha$, $m>n^*$, $v\subseteq
S$, and $A_\vare$ (for $\vare\in v$) such that conditions (i)--(iv) there
hold. Then
\[q\forces_{\bbP_\alpha}\mbox{`` }\bigcup_{\vare\in v} A_\vare=
\bigcup_{\vare\in v} \name{\nu}(\vare)(m)\subseteq \name{\varphi}(m)\ \&\
|\name{\varphi}(m)|<\frac{f(m)}{4}\mbox{ ''.}\]
But $|\bigcup\limits_{\vare\in v} A_\vare|= |v|\cdot g(m)\geq
\frac{f(m)}{2}$, a contradiction.
\end{proof}

\begin{context}
For the rest of this section we fix uncountable regular cardinals
$\kappa<\lambda$ such that $\lambda^\kappa=\lambda$. Also, instead of
``Y--iteration for $\kappa,\lambda$'' we will just say ``Y--iteration''.
\end{context}

\begin{lemma}
\label{manyYiter}
\begin{enumerate}
\item $\langle \bbP_0,\bbQ_0^\kappa\rangle$ is a Y--iteration (of length
  1).
\item Asssume that $\langle \bbP_\beta,\name{\bbQ}_\beta:\beta<\alpha
\rangle$ is a Y--iteration of length $\alpha<\lambda$ and $\name{\bbQ}$ is a
$\bbP_\alpha$--name for a ccc forcing notion of size ${<}\kappa$ (i.e.,
$\forces_{\bbP_\alpha}|\name{\bbQ}|<\kappa$). Then $\langle
  \bbP_\beta,\name{\bbQ}_\beta:\beta<\alpha\rangle\conc\langle\bbP_\alpha,
  \bbQ \rangle$ is a Y--iteration of length $\alpha+1$. In particular,
$\langle\bbP_\beta,\name{\bbQ}_\beta:\beta<\alpha\rangle\conc\langle
\bbP_\alpha,\bbC \rangle$ is a Y--iteration.
\item If $\langle \bbP_\beta,\name{\bbQ}_\beta:\beta<\alpha
\rangle$ is a Y--iteration and $\name{\bbQ}$ is a $\bbP_\alpha$--name for a
$\sigma$--centered forcing, then $\langle\bbP_\beta,\name{\bbQ}_\beta:
\beta<\alpha\rangle\conc\langle\bbP_\alpha,\bbQ \rangle$ is a Y--iteration.
\item If $\gamma\leq\lambda$ is a limit ordinal and $\langle \bbP_\beta,
  \name{\bbQ}_\beta:\beta<\gamma\rangle$ is an FS iteration such that
  $\langle \bbP_\beta,\name{\bbQ}_\beta:\beta<\alpha\rangle$ is a
  Y--iteration for every $\alpha<\gamma$, then $\langle \bbP_\beta,
  \name{\bbQ}_\beta:\beta<\gamma\rangle$ is a Y--iteration.
\item $(\bbY^\lambda_\kappa,\trianglelefteq)$ is a ${<}\lambda$--complete
  forcing notion.
\end{enumerate}
\end{lemma}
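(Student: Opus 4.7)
My plan for Lemma \ref{manyYiter} is to verify conditions $(\otimes)_1$--$(\otimes)_3$ of Definition \ref{Ydef}(1) in each part; $(\otimes)_1$ and $(\otimes)_2$ are routine bookkeeping given the hypotheses, so the real content lies in checking $(\otimes)_3$.

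For (1), the iteration is just the single step $\bbQ_0^\kappa$. Given $\langle p_\zeta:\zeta<\kappa\rangle\subseteq\bbQ_0^\kappa$ and distinct $\langle \delta_\zeta:\zeta<\kappa\rangle$, I would apply the $\Delta$-system lemma to $\{\dom(p_\zeta)\cup\{\delta_\zeta\}:\zeta<\kappa\}$, pass to $S\subseteq\kappa$ of size $\kappa$ where these form a $\Delta$-system with root $\Delta$, and further thin so that $n^{p_\zeta}$ and $p_\zeta\rest\Delta$ are constant (using that $\prod_{i<n^{p_\zeta}}[f(i)]^{g(i)}$ is finite). Distinctness forces $\delta_\zeta\notin\Delta$. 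Picking $m$ large enough and $v\subseteq S$ with $|v|=\lfloor f(m)/(2g(m))\rfloor+1$, I choose pairwise disjoint $A_\zeta\in[f(m)]^{g(m)}$ for $\zeta\in v$ and amalgamate the $p_\zeta$ into a single condition $q$ of height $m+1$, taking the common value on $\Delta$, the value $p_\zeta(\vare)$ on each private coordinate $\vare\in\dom(p_\zeta)\setminus\Delta$, and extending $q(\delta_\zeta)$ so that $q(\delta_\zeta)(m)=A_\zeta$; the amalgamation is legal precisely because of the $\Delta$-system structure.

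For (2), I would write $p_\zeta=(p'_\zeta,\name{q}_\zeta)\in\bbP_\alpha*\name{\bbQ}$. Since $\bbP_\alpha$ is ccc and $\forces|\name{\bbQ}|<\kappa$, a countable maximal antichain of $\bbP_\alpha$ uniformly bounds $|\name{\bbQ}|$ below some $\mu<\kappa$ (by regularity of $\kappa$), and I enumerate $\name{\bbQ}$ as $\{\name{q}^*_i:i<\mu\}$. By ccc, each $\name{q}_\zeta$ is decided by a countable maximal antichain above $p'_\zeta$; pigeonholing on $\mu<\kappa$ yields $S\subseteq\kappa$ of size $\kappa$, a fixed $i^*<\mu$, and $q'_\zeta\geq p'_\zeta$ with $q'_\zeta\forces\name{q}_\zeta=\name{q}^*_{i^*}$ for $\zeta\in S$. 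Applying the inductive $(\otimes)_3$ to $\langle q'_\zeta:\zeta\in S\rangle$ produces $q'\in\bbP_\alpha$, $m$, $v\subseteq S$, and $A_\zeta$; then $q=(q',\name{q}^*_{i^*})$ works. Part (3) is parallel but uses $\sigma$-centeredness: decompose $\name{\bbQ}=\bigcup_n\name{C}_n$ and extract $S$, $n^*$, and $q'_\zeta$ with $q'_\zeta\forces\name{q}_\zeta\in\name{C}_{n^*}$; after applying $(\otimes)_3$ to get finite $v$, centeredness of $\name{C}_{n^*}$ amalgamates the finitely many $\name{q}_\zeta$ ($\zeta\in v$) into a common upper bound $\name{q}^*$ over $q'$.

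For (4), I would apply the $\Delta$-system lemma to $\{\supp(p_\zeta):\zeta<\kappa\}$ to extract $S\subseteq\kappa$ of size $\kappa$ with root $w$. The key point is that $w$ is finite and $\gamma$ is a limit ordinal, so $w\subseteq\alpha$ for some $\alpha<\gamma$ with $\alpha\geq 1$. I apply the inductive $(\otimes)_3$ for the Y--iteration of length $\alpha$ to $\langle p_\zeta\rest\alpha:\zeta\in S\rangle$, obtaining $q'\in\bbP_\alpha$, $m>n$, $v\subseteq S$, and $A_\zeta$; then I define $q\in\bbP_\gamma$ by $q\rest\alpha=q'$ and, on $[\alpha,\gamma)$, by $q(\beta)=p_\zeta(\beta)$ for the unique $\zeta\in v$ with $\beta\in\supp(p_\zeta)\setminus w$. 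The anticipated obstacle is the case $\cf(\gamma)=\kappa$, where individual supports may be cofinal in $\gamma$ so that no single $\alpha<\gamma$ contains $\bigcup_\zeta\supp(p_\zeta)$; the resolution is exactly that only the root $w$ (which is finite!) needs to live below $\alpha$, while the pairwise disjoint private tails can be handled coordinatewise thanks to FS iteration. Part (5) is then immediate: for a $\trianglelefteq$-increasing chain $\langle\bar{\bbP}^\iota:\iota<\mu\rangle\subseteq\bbY^\lambda_\kappa$ with $\mu<\lambda$, the natural union $\bar{\bbP}$ has length $\sup_\iota\ell(\bar{\bbP}^\iota)<\lambda$ by regularity of $\lambda$, stays in $\cH(\beth_\lambda^+)$, and its proper initial segments are either inside some $\bar{\bbP}^\iota$ or limits thereof (Y--iteration by part (4)), so $\bar{\bbP}\in\bbY^\lambda_\kappa$ is the required $\trianglelefteq$-upper bound.
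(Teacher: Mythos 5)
Your argument matches the paper's proof essentially step for step: a $\Delta$-system with constant height in (1); reducing (2) and (3) to the inductive $(\otimes)_3$ for $\bbP_\alpha$ after fixing the last coordinate by pigeonhole on a set of size ${<}\kappa$ (respectively, by a $\sigma$-centering colouring plus centredness on the finite set $v$); using the finite $\Delta$-system root to localize to a proper initial segment in (4); and taking unions of ${\trianglelefteq}$-chains (whose length and supremum stay ${<}\lambda$ by regularity) in (5). The only discrepancy is that the paper declares (5) ``follows from (3),'' evidently a misprint for (4), which is the justification you cite.
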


\begin{proof}
In all cases the only demand of \ref{Ydef}(1) that needs to be verified is
$(\otimes)_3$.
\medskip

\noindent (1)\quad Let $\bbQ^\kappa_0$ be the forcing notion adding $\kappa$
Cohen reals as described in Definition \ref{ourCohen}.  Let $n<\omega$,
$\delta_\zeta\in\kappa$ and $p_\zeta\in\bbQ_0^\kappa$ (for $\zeta<\kappa$)
satisfy the assumptions of \ref{Ydef}(1)$(\otimes)_3$.  By making
conditions $p_\zeta$ stronger and possibly passing to a subsequence, we
may assume also that:
\begin{enumerate}
\item[$(*)_1$] $\delta_\zeta\in\dom(p_\zeta)$ for all $\zeta<\kappa$,
\item[$(*)_2$] for some $m>n+2$, for all $\zeta<\kappa$, we have
  $n^{p_\zeta}=m$ (so $p_\zeta(\vare)\in \prod\limits_{i<m}
  [f(i)]^{\textstyle g(i)}$ for $\vare\in\dom(p_\zeta)$),
\item[$(*)_3$] the family $\{\dom(p_\zeta):\zeta<\kappa\}$ forms a
  $\Delta$--system of finite sets and for all $\zeta,\zeta'<\kappa$ the
  conditions $p_\zeta,p_{\zeta'}$ are compatible.
\end{enumerate}
Pick any $v\subseteq\kappa$ of size $\lceil \frac{f(m)}{2\cdot
  g(m)}\rceil$. Since
\[\lceil \frac{f(m)}{2\cdot g(m)}\rceil\cdot g(m)\leq \frac{f(m)}{2}+
g(m)\leq\frac{f(m)}{2}+\frac{f(m)}{m+1}< f(m),\]
we may choose pairwise disjoint sets $A_\zeta\in [f(m)]^{\textstyle g(m)}$
(for $\zeta\in v$). Now define a condition $q\in\bbQ_0^\kappa$ so that
$\dom(q)=\bigcup\{\dom(p_\zeta): \zeta\in v\}$, $n^q=m+1$  and for
$\vare\in \dom(p_\zeta)$ the sequence $q(\vare)$ extends $p_\zeta(\vare)$
and $q(\delta_\zeta)(m)=A_\zeta$ (for $\zeta\in v$).
\medskip

\noindent (2)\quad Without loss of generality, for some ordinal $\gamma^*
<\kappa$ we have $\forces_{\bbP_\alpha}$`` the set of conditions in
$\name{\bbQ}$ is $\gamma^*$ ''. Let $n<\omega$ and $p_\zeta\in\bbP_{\alpha+1},
\delta_\zeta\in\kappa$ (for $\zeta<\kappa$) satisfy the assumptions of
\ref{Ydef}(1)$(\otimes)_3$.  We may make our conditions stronger and we may
pass to a subsequence, so we may assume that $\alpha\in \dom(p_\zeta)$ and
$p_\zeta(\alpha)=\gamma<\gamma^*$ is an actual object, not a name (for
$\zeta<\kappa$).  Apply the assumption of \ref{Ydef}(1)$(\otimes)_3$ for
$\langle\bbP_\beta,\name{\bbQ}_\beta: \beta<\alpha\rangle$ to
$n,p_\zeta\rest\alpha,\delta_\zeta$ (for $\zeta<\kappa$) and choose $m>n$,
$q^*\in \bbP_\alpha$, $v\subseteq \kappa$ and pairwise disjoint sets
$A_\zeta\subseteq f(m)$ each of size $g(m)$ (for $\zeta\in v$) such that
\begin{itemize}
\item $|v|\geq \frac{f(m)}{2\cdot g(m)}$ and
\item $q^*$ is stronger than all $p_\zeta\rest\alpha$ for $\zeta\in v$ and it
  forces that $\name{\nu}(\delta_\zeta)(m)=A_\zeta$ (for $\zeta\in v$).
\end{itemize}
Let $q\in \bbP_{\alpha+1}$ be such that $q\rest \alpha=q^*$ and
$q(\alpha)=\gamma$.
\medskip

\noindent (3)\quad Assume that $\forces_{\bbP_\alpha}$`` $\name{\bbQ}$
is a $\sigma$--centered forcing notion '' and fix a
$\bbP_\alpha$--name $\name{F}$ such that
\[\begin{array}{ll}
\forces_\bbP&\mbox{`` }\name{F}:\name{\bbQ}\longrightarrow\omega
\mbox{ is a function satisfying:}\\
&\ \ \mbox{ if }x_0,\ldots,x_k\in\name{\bbQ}, \ k<\omega,\
\mbox{ and }\name{F}(x_0)=\ldots=\name{F}(x_k)=m,\\
&\ \mbox{ then the conditions } x_0,\ldots,x_k\mbox{ have a common
upper bound in }\name{\bbQ}\mbox{ ''}.
\end{array}\]
Suppose that $n<\omega$ and $p_\zeta\in\bbP_{\alpha+1},
\delta_\zeta\in\kappa$ (for $\zeta<\kappa$) satisfy the assumptions of
\ref{Ydef}(1)$(\otimes)_3$. By making the conditions stronger and passing
to a subsequence we may demand that $\alpha\in \dom(p_\zeta)$ and for
some $M<\omega$ we also have $p_\zeta\rest\alpha\forces_{\bbP_\alpha}$``
$\name{F}(p_\zeta(\alpha))=M$ ''. Use the assumption of
\ref{Ydef}(1)$(\otimes)_3$ for $\langle\bbP_\beta,\name{\bbQ}_\beta:
\beta<\alpha\rangle$ for $n,p_\zeta\rest\alpha,\delta_\zeta$
(for $\zeta<\kappa$) to find $m>n$, $q^*\in \bbP_\alpha$, $v\subseteq
\kappa$ and pairwise disjoint sets $A_\zeta\in [f(m)]^{\textstyle g(m)}$
(for $\zeta\in v$) such that
\begin{itemize}
\item $|v|\geq \frac{f(m)}{2\cdot g(m)}$ and
\item $q^*$ is stronger than all $p_\zeta\rest\alpha$ for $\zeta\in v$ and it
  forces that $\name{\nu}(\delta_\zeta)(m)=A_\zeta$ (for $\zeta\in v$).
\end{itemize}
Then also the condition $q^*$ forces that $\name{F}(p_\zeta(\alpha))=
M$ for all $\zeta\in v$, and thus we may pick a $\bbP_\alpha$--name
$\name{q}_\alpha$ such that $q^*\forces$ `` $\name{q}_\alpha$ is a condition
stronger than all $p_\zeta(\alpha)$ for $\zeta\in v$ ''. Define
$q\in \bbP_{\alpha+1}$ by $q\rest \alpha=q^*$ and $q(\alpha)=\name{q}_\alpha$.
\medskip

\noindent (4)\quad Let $n,p_\zeta,\delta_\zeta$ (for $\zeta<\kappa$) be as in
the assumptions of \ref{Ydef}(1)$(\otimes)_3$. By passing to a subsequence
we may also demand that $\{\dom(p_\zeta):\zeta<\kappa\}$ is a
$\Delta$--system of finite subsets of $\gamma$ with root $D$. Pick
$\alpha<\gamma$ such that $D\subseteq \alpha$. Since $\langle
\bbP_\beta,\name{\bbQ}_\beta: \beta<\alpha\rangle$ is a Y--iteration, we may
apply \ref{Ydef}(1)$(\otimes)_3$ to $n,\delta_\zeta$ and
$p_\zeta\rest\alpha$ (for $\zeta<\kappa$). This will give us
$q^*,v$ and $A_\zeta$ (for $\zeta\in v$) satisfying (i)--(iv) there (with
$p_\zeta\rest \alpha$ in place of $p_\zeta$ and $q^*$ in place of $q$). Let
$q\in\bbP_\gamma$ be such that $\dom(q)=\dom(q^*)\cup\bigcup\{\dom
(p_\zeta):\zeta\in v\}$ and $q\rest\alpha=q^*$ and $q(\beta)=p_\zeta(\beta)$
whenever $\zeta\in v$, $\beta\in \dom(p_\zeta)\setminus\alpha$.
\medskip

\noindent (5)\quad Follows from (3).
\end{proof}

\begin{lemma}
\label{onemore}
Assume that
\begin{enumerate}
\item[(a)] $\aleph_0\leq\mu\leq \kappa$ is a regular cardinal,
  $\alpha<\lambda$ is a limit ordinal of cofinality $\mu$ and  $\langle
  \alpha(\zeta):\zeta< \mu\rangle$ is a strictly increasing sequence cofinal
  in $\alpha$,
\item[(b)] $\langle\bbP_\beta,\name{\bbQ}_\beta:\beta< \alpha \rangle$
  is a Y--iteration,
\item[(c)] $\name{\bar{\varphi}}= \langle \name{\varphi}_\zeta: \zeta< \mu
  \rangle$ is a $\bbP_{\alpha(0)}$--name for a $\mu$--sequence of null
  slaloms below $f$ (so $\forces \name{\varphi}_\zeta \in\cS_f$),
\item[(d)]  for each $\zeta<\mu$ we have $\forces_{\bbP_{\alpha(\zeta)}}
  \name{\bbQ}_{\alpha(\zeta)}=\bbC$ with $\name{c}_\zeta$ being the
  $\bbP_{\alpha(\zeta)+1}$--name for the Cohen real in $\can$ added by
  $\name{\bbQ}_{\alpha(\zeta)}$,
\item[(e)] $\name{\tau}_\zeta$ is a $\bbP_\alpha$--name for an element of 2
  (for $\zeta<\mu$),
\item[(f)] for $\zeta<\mu$,  $\name{\psi}_\zeta$ is a
  $\bbP_\alpha$--name for a null slalom below $f$ such that
\[\forces_{\bbP_\alpha}\mbox{ `` }\name{\psi}_\zeta(i)=
\left\{\begin{array}{rl}
\name{\varphi}_\zeta(i)&\mbox{ if }\name{c}_\zeta(i)=\name{\tau}_\zeta,\\
\emptyset        &\mbox{ if }\name{c}_\zeta(i)=1-\name{\tau}_\zeta
\end{array}\right. \quad \mbox{ for each }i<\omega\mbox{ '',}\]
and $\name{\bar{\psi}}= \langle\name{\psi}_\zeta:\zeta<\mu\rangle$ is the
resulting $\bbP_\alpha$--name for a $\mu$--sequence of null slaloms below
$f$.
\end{enumerate}
Then $\langle \bbP_\beta,\name{\bbQ}_\beta:\beta<\alpha\rangle\conc
\langle\bbP_\alpha, \qforname\rangle$ is a Y--iteration of length
$\alpha+1$.
\end{lemma}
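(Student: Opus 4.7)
The plan is to verify the three conditions $(\otimes)_1$--$(\otimes)_3$ of Definition~\ref{Ydef}(1) for the extended iteration $\langle\bbP_\beta,\name{\bbQ}_\beta:\beta\leq\alpha\rangle$ with $\name{\bbQ}_\alpha=\qforname$. Clause $(\otimes)_1$ is inherited and $(\otimes)_2$ is immediate since $\forces_{\bbP_\alpha}|\qforname|\leq\mu\leq\kappa<\lambda$. The entire task is to verify $(\otimes)_3$ at the new top coordinate.

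Given $n<\omega$, pairwise distinct $\langle\delta_\zeta:\zeta<\kappa\rangle\subseteq\kappa$, and $\langle p_\zeta:\zeta<\kappa\rangle\subseteq\bbP_{\alpha+1}$, I first perform a uniformization. By strengthening and enlarging $K^{p_\zeta}$ if needed (extending the $\sigma$-coordinate accordingly; this is permitted because clause (b) in Definition~\ref{forforsla} bounds $|\bigcup_{\xi\in u^\zeta}\name{\psi}_\xi(\ell)|$ by $f(\ell)/M^{p_\zeta}$), I may assume that each $p_\zeta(\alpha)$ is a concrete tuple $(K,M,u^\zeta,S)$, that each $\name{\tau}_\xi$ for $\xi\in u^\zeta$ is decided as some $t^\zeta_\xi\in 2$, and that each Cohen condition $p_\zeta(\alpha(\xi))$ at stage $\alpha(\xi)$ for $\xi\in u^\zeta$ has domain contained in $[0,K)$. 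After pigeonholing and applying the $\Delta$-system lemma, I pass to a subsequence of size $\kappa$ on which $K,M,S,\ell^*=|u^\zeta|,r=|u^*|$ are uniform, $\{u^\zeta\}$ is a $\Delta$-system with root $u^*$, and $t_\xi:=t_\xi^\zeta$ for $\xi\in u^*$ does not depend on $\zeta$; for $\xi\in u^\zeta\setminus u^*$ write $t_\xi:=t^\zeta_\xi$ for the unique $\zeta$ with $\xi\in u^\zeta$.

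Next, apply the inductive hypothesis $(\otimes)_3$ for $\langle\bbP_\beta,\name{\bbQ}_\beta:\beta<\alpha\rangle$ with the conditions $\langle p_\zeta\rest\alpha\rangle$, the ordinals $\langle\delta_\zeta\rangle$ and $n$, obtaining $q^*\in\bbP_\alpha$, an integer $m_0>n$, a set $v\subseteq\kappa$ with $|v|\geq f(m_0)/(2g(m_0))$, and pairwise disjoint $A_\zeta\in[f(m_0)]^{\textstyle g(m_0)}$ for $\zeta\in v$ such that $q^*\geq p_\zeta\rest\alpha$ and $q^*\forces_{\bbP_\alpha}\name{\nu}(\delta_\zeta)(m_0)=A_\zeta$. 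Set $u'=\bigcup_{\zeta\in v}u^\zeta$, finite. Strengthen $q^*$ to $q^{**}$ so that $q^{**}$ decides an integer $K^*>K$ with $q^{**}\forces|\name{\varphi}_\xi(\ell)|<f(\ell)/(M|u'|)$ for all $\xi\in u'$ and $\ell\geq K^*$ (possible as each $\name{\varphi}_\xi\in\cS_f$ and $u'$ is finite), and also decides $\name{\varphi}_\xi(\ell)=\Phi_\xi^\ell\subseteq f(\ell)$ for $\xi\in u^*$, $\ell\in[K,K^*)$. Because the Cohen conditions at stages $\alpha(\xi)$ for $\xi\in u'$ have domain contained in $[0,K)$, one further strengthening of $q^{**}$ forces $\name{c}_\xi(\ell)=1-t_\xi$ (hence $\name{\psi}_\xi(\ell)=\emptyset$) for every $\xi\in u'\setminus u^*$ and $\ell\in[K,K^*)$ without conflict.

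Finally, define $S'(\ell)=S(\ell)$ for $\ell<K$ and $S'(\ell)=\bigcup_{\xi\in u^*}\Phi_\xi^\ell$ for $\ell\in[K,K^*)$. Clause (b) of $p_\zeta(\alpha)$, applied at $\xi\in u^*\subseteq u^\zeta$, gives $|\name{\varphi}_\xi(\ell)|<f(\ell)/(M\ell^*)$, whence $|S'(\ell)|\leq r\cdot f(\ell)/(M\ell^*)\leq f(\ell)/M$. Thus $(K^*,M,u',S')$ is a valid condition in $\qforname$. Let $q\in\bbP_{\alpha+1}$ satisfy $q\rest\alpha=q^{**}$ and $q(\alpha)=(K^*,M,u',S')$. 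For each $\zeta\in v$ we have $q\geq p_\zeta$: on the interval $[K,K^*)$, $\bigcup_{\xi\in u^\zeta}\name{\psi}_\xi(\ell)=\bigcup_{\xi\in u^*}\name{\psi}_\xi(\ell)\subseteq\bigcup_{\xi\in u^*}\Phi_\xi^\ell=S'(\ell)$, using the $\Delta$-system ($u^\zeta\setminus u^*\subseteq u'\setminus u^*$) and the Cohen-forced vanishing of $\name{\psi}_\xi$ there. Together with $q\forces\name{\nu}(\delta_\zeta)(m_0)=A_\zeta$ inherited from $q^*$, this shows $(q,m_0,v,\langle A_\zeta:\zeta\in v\rangle)$ witnesses $(\otimes)_3$. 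The main obstacle throughout is the Cohen bookkeeping in the uniformization step: ensuring that the final requirement $\name{c}_\xi(\ell)=1-t_\xi$ does not conflict with Cohen values already decided by the conditions $p_\zeta$.
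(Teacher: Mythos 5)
Your overall plan is close to the paper's, but there is a genuine gap at the step where you ``further strengthen $q^{**}$'' to force $\name{c}_\xi(\ell)=1-t_\xi$ for $\xi\in u'\setminus u^*$ and $\ell\in[K,K^*)$ ``without conflict.'' Your justification --- that the Cohen conditions at stages $\alpha(\xi)$ for $\xi\in u'$ have domain contained in $[0,K)$ --- is a property of the \emph{original} conditions $p_\zeta$, not of $q^{**}$. You have applied $(\otimes)_3$ for the full iteration $\langle\bbP_\beta,\name{\bbQ}_\beta:\beta<\alpha\rangle$ as a black box, obtaining $q^*\in\bbP_\alpha$ about which you only know that it dominates the $p_\zeta\rest\alpha$ (for $\zeta\in v$) and decides $\name{\nu}(\delta_\zeta)(m_0)$. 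Nothing prevents $q^*$, and hence $q^{**}$, from having already decided values of $\name{c}_\xi(\ell)$ for $\ell\in[K,K^*)$ at arbitrary coordinates $\alpha(\xi)$ with $\xi\in u'\setminus u^*$ --- it could decide them to be $t_\xi$, in which case the announced strengthening is impossible. In that event $\bigcup_{\xi\in u^\zeta}\name{\psi}_\xi(\ell)$ on $[K,K^*)$ has no reason to sit inside $S'(\ell)=\bigcup_{\xi\in u^*}\Phi^\ell_\xi$, and putting all of $\bigcup_{\xi\in u'}\name{\varphi}_\xi(\ell)$ into $S'(\ell)$ instead would blow the size bound $f(\ell)/M$, since $|u'|$ is roughly $|v|\cdot\ell^*$ while the hypothesis only controls $|\name{\varphi}_\xi(\ell)|$ against $f(\ell)/(M\ell^*)$.

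The paper's proof closes exactly this hole by not applying the inductive hypothesis at $\alpha$. It forms $\Delta$--systems both for the $u^\zeta$ (root $U$) and for $\dom(p^*_\zeta)$ (root $D$), chooses $\vare^*$ with $D\setminus\{\alpha\}\subseteq\alpha(\vare^*)$ and $U=u^\zeta\cap\vare^*$, and then applies $(\otimes)_3$ to the \emph{shorter} Y--iteration $\langle\bbP_\beta,\name{\bbQ}_\beta:\beta<\alpha(\vare^*)\rangle$. The resulting conditions $q_0,q_1$ live in $\bbP_{\alpha(\vare^*)}$ --- as does the strengthening that decides the $\bbP_{\alpha(0)}$--names $\name{\varphi}_\vare$ --- so they do not touch the Cohen coordinates $\alpha(\xi)$ with $\xi\geq\vare^*$. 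Those coordinates are only constrained by the (pairwise compatible, thanks to the $\Delta$--systems) original conditions $p^*_\zeta$, whose Cohen parts have bounded domain; the intended values $1-\gt^\zeta_\vare$ can therefore be freely imposed above that bound, and the final $q\in\bbP_{\alpha+1}$ is assembled by gluing $q_1$ with the (now safely massaged) top parts of the $p^*_\zeta$. To repair your argument, add the $\Delta$--system step on $\dom(p_\zeta)$, pick the cutoff $\vare^*$, and apply the inductive hypothesis at $\alpha(\vare^*)$ instead of at $\alpha$.
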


\begin{proof}
First we consider the case when $\mu=\kappa$ and let us argue that
\ref{Ydef}(1)$(\otimes)_3$ holds for $\bbP_{\alpha+1}$.

Let $n<\omega$, $p_\zeta\in \bbP_{\alpha+1}$ and $\delta_\zeta<\kappa$ (for
$\zeta<\kappa$) be such that $\delta_\zeta\neq\delta_{\zeta'}$ for
$\zeta<\zeta'<\kappa$.  For each $\zeta<\kappa$ pick a condition
$p'_\zeta\in \bbP_{\alpha+1}$ stronger than $p_\zeta$ and such that
\begin{enumerate}
\item[$(*)_1$]  $\alpha\in\dom(p'_\zeta)$ and for some  $k^\zeta,
  m^\zeta,u^\zeta$ and  $\sigma^\zeta$ (objects, not names) we have
  $p'_\zeta\rest\alpha\forces_{\bbP_\alpha} \mbox{`` } p'_\zeta(\alpha)=
  (k^\zeta,m^\zeta,u^\zeta,\sigma^\zeta)$ ''.
\end{enumerate}
Choose conditions $p_\zeta''\in\bbP_{\alpha+1}$ stronger than $p_\zeta'$
(so also $p''_\zeta\geq p_\zeta$) and such that $p''_\zeta(\alpha)=
p'_\zeta(\alpha)$ and for all $\zeta$:
\begin{enumerate}
\item[$(*)_2$] for some (objects, not names)  $\gt^\zeta_\vare$ for
  $\vare\in u^\zeta$ we have $p_\zeta''\rest\alpha\forces_{\bbP_\alpha}
  \mbox{`` } \name{\tau}_\vare=\gt^\zeta_\vare$ '',
\item[$(*)_3$] for  some $i^\zeta<\omega$ for all $\vare\in u^\zeta$ we have
  that
\[\alpha(\vare)\in\dom(p''_\zeta)\quad\mbox{ and }\quad
p_\zeta''(\alpha(\vare))\in {}^{\textstyle i^\zeta}2 \mbox{ are
  actual objects, not names.}\]
\end{enumerate}
Since each $\name{\varphi}_\vare$ is a $\bbP_{\alpha(0)}$--name, we may
decide the initial segments of $\name{\varphi}_\vare$ by strengthening
$p_\zeta''\rest \alpha(0)$ only (i.e., without changing $p_\zeta''\rest
[\alpha(0),\alpha]$). Therefore, after using a procedure similar to that in
the proof of \ref{forprop}, for each $\zeta<\kappa$ we  may find a
condition $p^*_\zeta\in\bbP_{\alpha+1}$, $K^\zeta>k^\zeta+i^\zeta$ and a
sequence $\rho^\zeta\in\prod\limits_{i<K^\zeta} {\mathcal P}(f(i))$ such
that
\begin{enumerate}
\item[$(*)_4$] $p_\zeta\leq p_\zeta''\leq p^*_\zeta$, and  $p_\zeta''\rest
  [\alpha(0),\alpha)= p_\zeta^*\rest [\alpha(0),\alpha)$, and
\item[$(*)_5$] $p^*_\zeta\forces_{\bbP_\alpha}$`` $p^*_\zeta(\alpha)=
  (K^\zeta,m^\zeta,u^\zeta,\rho^\zeta)$ ''.
\end{enumerate}
Next we may find a set $S\subseteq \kappa$ of size $\kappa$ and $K^*, m^*,
\rho^*, i^*$ and $\ell^*$ such that
\begin{enumerate}
\item[$(*)_6$] $K^*=K^\zeta$, $m^*=m^\zeta$,  $\rho^*=\rho^\zeta$,
  $|u^\zeta|=\ell^*$ and $i^\zeta=i^*$ for all $\zeta\in S$,
\item[$(*)_7$] $\{u^\zeta:\zeta\in S\}$ is a $\Delta$--system of
  finite subsets of $\kappa$ with root $U$,
\item[$(*)_8$] $\{\dom(p^*_\zeta):\zeta\in S\}$ is a $\Delta$--system of
  finite subsets of $\alpha+1$ with root $D$,
\item[$(*)_9$] for some $\vare^*<\kappa$ we have
  $D\setminus\{\alpha\}\subseteq \alpha (\vare^*)$ and $U=u^\zeta
  \cap\vare^*$ for all $\zeta\in S$.
\end{enumerate}
Since $\langle\bbP_\beta,\name{\bbQ}_\beta:\beta<\alpha(\vare^*) \rangle$ is
a Y--iteration, we may apply \ref{Ydef}(1)$(\otimes)_3$ to $\langle
p_\zeta^*\rest \alpha(\vare^*), \delta_\zeta:\zeta\in S\rangle$ and
$n$. This will give us $v\subseteq S$, $q_0\in\bbP_{\alpha(\vare^*)}$, $m>n$
and  $A_\zeta \in [f(m)]^{\textstyle  g(m)}$ for $\zeta\in v$ such that
\begin{enumerate}
\item[$(*)_{10}$]
\begin{itemize}
\item $|v|\geq\frac{f(m)}{2\cdot g(m)}$ and $p_\zeta^*\rest
  \alpha(\vare^*)\leq q_0$ for all $\zeta\in v$, and
\item $A_\zeta\cap A_{\zeta'}=\emptyset$ for distinct $\zeta,\zeta'\in v$,
  and
\item $q_0\forces_{\bbP_{\alpha(\vare^*)}}\mbox{`` }\big(\forall \zeta\in
  v\big)\big(\name{\nu}(\delta_\zeta)(m)= A_\zeta\big)\mbox{ ''}$.
\end{itemize}
\end{enumerate}
Next, since $\name{\varphi}_\vare$ are $\bbP_{\alpha(0)}$--names, we
may we pick $q_1\in\bbP_{\alpha(\vare^*)}$, $q_1\geq q_0$, $K>K^*\geq
i^*$ and $\rho_\vare\in\prod\limits_{i<K} \cP(f(i))$ (for $\vare\in U$) such
that $q_1\forces_{\bbP_{\alpha(\vare^*)}}\mbox{`` }(\forall \vare\in U)(
\name{\varphi}_\vare\rest K=\rho_\vare) \mbox{ ''}$ and
\[q_1\forces_{\bbP_{\alpha(\vare^*)}}\mbox{`` }\big(\forall j\geq
K\big)\big(\forall\zeta\in v\big)\big(\forall\vare\in u^\zeta\big) \big(|
\name{\varphi}_\vare(j)|<\frac{f(j)}{|v|\cdot \ell^*\cdot
  m^*}\big)\mbox{ ''}\]
Define $q\in\bbP_{\alpha+1}$ so that
\begin{itemize}
\item $\dom(q)=\dom(q_1)\cup\bigcup\{\dom(p_\zeta^*): \zeta\in v\}$,
\item $q\rest \alpha(\vare^*)=q_1$,
\item if $\zeta\in v$ and  $\beta\in \dom(p_\zeta^*)\setminus
  (\alpha(\vare^*)\cup\{\alpha(\vare): \vare\in u^\zeta\})$, then
  $q(\beta)=p_\zeta^*(\beta)$,
\item if $\zeta\in v$ and $\vare\in u^\zeta\setminus \vare^*$, then
  $q(\alpha(\vare))\in {}^{\textstyle K} 2$ is such that $p_\zeta''( \alpha(
  \vare))= p_\zeta^*(\alpha(\vare))\vtl q(\alpha(\vare))$ and for $i\in
  [i^*,K)$ we have $q(\alpha(\vare))(i)=1-\gt_\vare^\zeta$,
\item $q(\alpha)=(K,m^*,u^+,\sigma^+)$, where $u^+=\bigcup\{u^\zeta:\zeta\in
  v\}$ and $\sigma^+\in \prod\limits_{i<k^+} {\mathcal P}(f(i))$ is such that
  $\rho^*\vtl \sigma^+$  and $\sigma^+(i)=\bigcup\limits_{\vare\in U}
  \rho_\vare(i)$ for $i\in [K^*,K)$.
\end{itemize}
The rest, when $\mu=\kappa$, should be clear.
\medskip

Let us assume now that $\mu<\kappa$ and again, to argue for
\ref{Ydef}(1)$(\otimes)_3$, suppose that $n<\omega$, $p_\zeta\in
\bbP_{\alpha+1}$ and $\delta_\zeta<\kappa$ (for $\zeta<\kappa$) are such
that $\delta_\zeta\neq\delta_{\zeta'}$ for $\zeta<\zeta'<\kappa$. Passing to
stronger conditions we may assume that, for each $\zeta<\kappa$,
\[\alpha\in \dom(p_\zeta)\quad\mbox{ and }\quad p_\zeta\rest\alpha
\forces_{\bbP_\alpha} \mbox{`` } p_\zeta(\alpha)=(k^\zeta,m^\zeta,u^\zeta,
\sigma^\zeta)\mbox{ ''}\]
(where $k^\zeta,m^\zeta,u^\zeta,\sigma^\zeta$ are actual objects). For some
$\vare^*<\mu$ and $k,m,u,\sigma$ the set
\[S=\big\{\zeta<\kappa: \dom(p_\zeta)\subseteq\alpha(\vare^*) \cup
\{\alpha\}\ \&\ (k^\zeta,m^\zeta,u^\zeta,\sigma^\zeta)=(k,m,u,
\sigma)\big\}\]
is of size $\kappa$. Like before, $\langle\bbP_\beta,\name{\bbQ}_\beta:
\beta< \alpha(\vare^*) \rangle$ is a Y--iteration, so we may find
$v\subseteq S$, $q_0\in\bbP_{\alpha(\vare^*)}$, $m>n$ and  $A_\zeta \in
[f(m)]^{\textstyle  g(m)}$ for $\zeta\in v$ such that demands listed in
$(*)_{10}$ are satisfied. Let $q\in\bbP_{\alpha+1}$ be such that $\dom(q)=
\dom(q_0)\cup\{\alpha\}$ and $q\rest\alpha\forces q(\alpha)=(k,m,u,\sigma)$.
\end{proof}

\begin{theorem}
\label{main}
Assume $\kappa<\lambda$ are uncountable regular cardinals such that
$\lambda^\kappa=\lambda$. Let $H\subseteq \bbY^\lambda_\kappa$ be generic
over $\bV$ and let $\bar{\bbQ}=\langle\bbP_\alpha,\name{\bbQ}_\alpha:
\alpha<\lambda\rangle=\bigcup H\in \bV[H]$ and
$\bbP_\lambda=\lim(\bar{\bbQ})$. Then $\bbP_\lambda$ is  a ccc forcing
notion with a dense subset of size $\lambda$ and
\[\begin{array}{ll}
\forces_{\bbP_\lambda}\mbox{`` }&{\bf MA}_{{<}\kappa}({\rm ccc})\
\mbox{ and }\ {\bf MA}(\mbox{\rm $\sigma$--centered})\ \mbox{ and}\\
&\add(\cI_f)=\kappa\ \mbox{ and }\ \cofin^-(\cI_f)\geq\kappa^+\
\mbox{ and }\ 2^{\aleph_0}=\lambda\mbox{ ''.}
\end{array}\]
\end{theorem}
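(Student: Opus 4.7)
The plan is to verify the five forced assertions in turn; the first four follow from standard bookkeeping and the earlier lemmas, while $\cofin^-(\cI_f)\geq\kappa^+$ is the main point.

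First I would establish the basic structure. By the ${<}\lambda$-completeness of $\bbY^\lambda_\kappa$ (Lemma \ref{manyYiter}(5)) and density arguments via Lemma \ref{manyYiter}(2)--(4), the generic $\bar{\bbQ}=\bigcup H$ is a Y-iteration of length exactly $\lambda$. Hence $\bbP_\lambda$ is ccc with a dense subset of size $\lambda$ (from $\forces|\name{\bbQ}_\beta|\leq\lambda$ and $\lambda^{\aleph_0}=\lambda$). Standard bookkeeping in $\bbY^\lambda_\kappa$, enabled by $\lambda^\kappa=\lambda$, realizes all $\bbP_\lambda$-names for ccc forcings of size ${<}\kappa$ and for $\sigma$-centered forcings as iterands (via Lemma \ref{manyYiter}(2),(3)), yielding ${\bf MA}_{{<}\kappa}({\rm ccc})$ and ${\bf MA}(\sigma{\rm -centered})$; the same bookkeeping adds $\lambda$ many Cohen reals, so $2^{\aleph_0}=\lambda$.

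The equality $\add(\cI_f)=\kappa$ is then immediate: the upper bound is Lemma \ref{cruciallemma}, and the lower bound follows from ${\bf MA}_{{<}\kappa}(\sigma{\rm -centered})$ applied to $\qfor$. The forcing $\qfor$ is $\sigma$-centered --- conditions sharing $(k^p,m^p,\sigma^p)$ are compatible after enlarging $k$ to verify clause (b) of Definition \ref{forforsla} for the combined support $u$, using that each $\varphi_\zeta\in\cS_f$ is null --- and by Proposition \ref{forprop} it covers any family of ${<}\kappa$ slaloms with a single slalom in $\cS_f$.

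The main work is $\cofin^-(\cI_f)\geq\kappa^+$. Suppose for contradiction that $\langle\cI_\zeta:\zeta<\mu\rangle$ is a strictly increasing sequence of ideals on $\cX_f$ with union $\cI_f$ for some regular $\mu\leq\kappa$. For each $\zeta<\mu$, since $\cI_\zeta\subsetneq\cI_f$ and any set in $\cI_f$ is covered by some $[\varphi]$ with $\varphi\in\cS_f$, pick $\varphi_\zeta\in\cS_f$ with $[\varphi_\zeta]\notin\cI_\zeta$; by ccc and $\mu<\cf(\lambda)$, $\bar{\varphi}$ is captured as a $\bbP_{\alpha^*}$-name for some $\alpha^*<\lambda$. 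The key combinatorial observation is that for any $c_\zeta\in\can$, letting $\psi_\zeta^t(n)=\varphi_\zeta(n)$ when $c_\zeta(n)=t$ and $\emptyset$ otherwise, one has $[\varphi_\zeta]\subseteq[\psi_\zeta^0]\cup[\psi_\zeta^1]$ (split the infinite set $\{n:x(n)\in\varphi_\zeta(n)\}$ by the value of $c_\zeta(n)$); so by finite-union and downward closure of $\cI_\zeta$, at least one of $[\psi_\zeta^0],[\psi_\zeta^1]$ lies outside $\cI_\zeta$. Using the bookkeeping in $\bbY^\lambda_\kappa$ one arranges a stage $\alpha<\lambda$ of cofinality $\mu$ with cofinal Cohen iterands $\name{\bbQ}_{\alpha(\zeta)}=\bbC$ producing reals $c_\zeta$ and with $\name{\bbQ}_\alpha=\qforname$ for $\bbP_\alpha$-names $\name{\tau}_\zeta$ chosen so that $[\psi_\zeta^{\tau_\zeta}]\notin\cI_\zeta$ for every $\zeta$. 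Lemma \ref{onemore} preserves the Y-iteration property, and Proposition \ref{forprop} yields a generic $\varsigma\in\cS_f$ with $[\psi_\zeta^{\tau_\zeta}]\subseteq[\varsigma]$. Since $[\varsigma]\in\cI_f$, we have $[\varsigma]\in\cI_{\zeta^*}$ for some $\zeta^*<\mu$, so for any $\zeta>\zeta^*$ we obtain $[\psi_\zeta^{\tau_\zeta}]\in\cI_{\zeta^*}\subseteq\cI_\zeta$, contradicting the choice of $\tau_\zeta$.

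The main obstacle is selecting the $\bbP_\alpha$-names $\name{\tau}_\zeta$ that realize $[\psi_\zeta^{\tau_\zeta}]\notin\cI_\zeta$ in $\bV[\bbP_\lambda]$: each $\cI_\zeta$ is a $\bbP_\lambda$-name, so the truth of ``$[\psi_\zeta^t]\in\cI_\zeta$'' need not be decided at stage $\alpha$. Resolving this involves scheduling in $\bbY^\lambda_\kappa$ an instance of the Lemma \ref{onemore} configuration for every one of the at most $\lambda^\kappa=\lambda$ possible $\bbP_\alpha$-names $\bar{\tau}\in{}^\mu 2$, together with a density argument in $\bbY^\lambda_\kappa$ ensuring that a ``correct'' instance appears in the generic iteration. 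The ${<}\lambda$-completeness of $\bbY^\lambda_\kappa$ and Lemmas \ref{manyYiter}(2),(3), \ref{onemore} enable pushing this through.
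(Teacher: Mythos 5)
Your overall plan matches the paper's: the ccc/size/MA/$2^{\aleph_0}$ facts from Lemma \ref{manyYiter}, the $\add$ upper bound from Lemma \ref{cruciallemma}, the lower bound from MA, and then a contradiction argument for $\cofin^-(\cI_f)\geq\kappa^+$ via the $c_\zeta$-splitting of the $\varphi_\zeta$, Lemma \ref{onemore}, and Proposition \ref{forprop}. You also correctly single out the obstacle: the witness $\tau_\zeta$ to ``$[\psi_\zeta^{\tau_\zeta}]\notin\name{\cI}_\zeta$'' lives {\em a priori} at the level of $\bbP_\lambda$-names, so it is not clear one can thread it through a stage $\alpha^*<\lambda$ where $\qforname$ is to be plugged in.

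However, the resolution you propose does not work, and it diverges from the paper's. Scheduling one instance of the Lemma \ref{onemore} configuration for each of the $\lambda$-many $\bbP_\alpha$-names $\bar{\tau}\in{}^{\mu}2$ (for a fixed $\alpha$) and hoping a density argument hits a ``correct'' one has two problems. First, the correct $\tau_\zeta$ is genuinely a $\bbP_\lambda$-name (a canonical name, hence supported on some $\bbP_\gamma$ with $\gamma<\lambda$, but $\gamma$ depends on $\zeta$ and is not bounded by any $\alpha$ fixed in advance), so it need not occur among $\bbP_\alpha$-names for a prechosen $\alpha$. Second, each prescheduled instance uses its own Cohen reals $\langle c_\zeta\rangle$; whether the $\bar{\tau}$ attached to that instance is ``correct'' for those particular $c_\zeta$'s is determined only by the full $\bbP_\lambda$-generic, so no $\bbY^\lambda_\kappa$-density argument can certify in advance that any scheduled instance is correct. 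What the paper actually does (Claim \ref{clA}) is to treat $\tau(\cdot,\cdot)$ as a $\bbY^\lambda_\kappa$-name $\name{T}$ and build the Y-iteration and the $\tau_\zeta$'s {\em simultaneously} by induction on $\zeta<\mu$: place a Cohen iterand at $\alpha(\zeta)$ to produce $\name{c}_\zeta$, then extend the $\bbY^\lambda_\kappa$-condition far enough to {\em decide} $\name{T}(\zeta,\name{c}_\zeta)$ as a $\bbP_{\alpha(\zeta+1)}'$-name (possible because $\bbY^\lambda_\kappa$ is ${<}\lambda$-complete and the decided value is a countable object of $\bV$), take unions at limits using Lemma \ref{manyYiter}(4), and only at the end adjoin $\qforname$ at $\alpha^*=\sup\alpha(\zeta)$. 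This interleaving is the essential content of the proof and cannot be replaced by pre-enumerating candidates.

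A smaller issue: in your argument that $\add(\cI_f)\geq\kappa$ you claim $\qfor$ is $\sigma$-centered via the partition by $(k^p,m^p,\sigma^p)$. That does not hold: if $p_i=(k,m,\{\zeta_i\},\sigma)$ for $i=1,2$, clause (b) gives $|\varphi_{\zeta_i}(\ell)|<f(\ell)/m$ for $\ell\geq k$, so $|\varphi_{\zeta_1}(\ell)\cup\varphi_{\zeta_2}(\ell)|$ may exceed $f(\ell)/m$, and then no common extension with $u=\{\zeta_1,\zeta_2\}$ satisfies the ordering requirement $|\sigma^q(\ell)|\leq f(\ell)/m^{p_i}$; so conditions agreeing on $(k,m,\sigma)$ can be incompatible. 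The paper only shows $\qfor$ is ccc (Proposition \ref{forprop}), and that is what is needed: $\add(\cI_f)\geq\kappa$ follows from ${\bf MA}_{{<}\kappa}({\rm ccc})$ applied to $\qfor$, which your own first paragraph already establishes.
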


\begin{proof}
First note that the forcing with $\bbY^\lambda_\kappa$ does not add
sequences of ordinals of length ${<}\lambda$ (by Lemma
\ref{manyYiter}(5)). Hence in $\bV[H]$ we still have that $\kappa,\lambda$
are regular cardinals and $\lambda^\kappa=\lambda$.

Let us work in $\bV[H]$.

Clearly $\bar{\bbQ}$ is a Y--iteration for $\kappa,\lambda$ of length
$\lambda$. Hence $\bbP_\lambda$ is a ccc forcing notion, it has a dense
subset of size $\lambda$ and forces that $2^{\aleph_0}=\lambda$ (remember
\ref{Ydef}(1)$(\otimes)_2$, \ref{manyYiter}(2)). {\em A canonical
  $\bbP_\lambda$--name\/} $\name{\eta}$ for a real in
$\prod\limits_{n<\omega}\cZ_n$ (where $\langle \cZ_n:n<\omega\rangle
\in\bV$, $\cZ_n\neq \emptyset$) is a sequence $\langle A_n,\pi_n:n<
\omega\rangle$ such that each $A_n$ is a maximal antichain in $\bbP_\lambda$,
$\pi_n:A_n\longrightarrow\cZ_n$ and $q\forces_{\bbP_\lambda}$``
$\name{\eta}(n)=\pi_n(q)$ '' for $q\in A_n$, $n<\omega$. For every
$\bbP_\lambda$--name $\name{\rho}$ for an element of
$\prod\limits_{n<\omega}\cZ_n$ there is a canonical name
$\name{\eta}$ such that $\forces \name{\eta}=\name{\rho}$. Also, if
$\name{\eta}$ is a canonical $\bbP_\lambda$--name for a real, then it is a
$\bbP_\alpha$--name for some $\alpha<\lambda$.
\medskip

Let us argue that $\forces_{\bbP_\lambda}\cofin^-(\cI_f)\geq \kappa^+$. If
not, then for some infinite regular cardinal $\mu\leq\kappa$ and
$\bbP_\lambda$--names $\name{\cI}_\zeta,\name{\varphi}_\zeta$ (for
$\zeta<\mu$) we have
\begin{enumerate}
\item[$(\circledast)_1$]
$\displaystyle\forces_{\bbP_\lambda}\mbox{`` }\name{\varphi}_\zeta\in \cS_f\
\mbox{ and } \ \name{\cI}_\zeta\subseteq \cI_f$ is an ideal '',
\end{enumerate}
and for some $p\in\bbP_\lambda$
\begin{enumerate}
\item[$(\circledast)_2$]
$\displaystyle p\forces_{\bbP_\lambda}\mbox{`` }\bigcup_{\zeta<\mu}
\name{\cI}_\zeta=\cI_f\ \mbox{ and }\ (\forall\zeta<\mu)([
\name{\varphi}_\zeta] \notin \name{\cI}_\zeta)$ ''.
\end{enumerate}
We may assume that all $\name{\varphi}_\zeta$ are $\bbP_{\alpha_0}$--names
for some $\alpha_0<\lambda$.

Suppose now that $\zeta<\mu$ and $\name{c}_\zeta$ is a canonical
$\bbP_\lambda$--name for a real in $\can$. Let $\name{\psi}^0_\zeta,
\name{\psi}^1_\zeta$ be $\bbP_\lambda$--names for elements of $\cS_f$ such
that for each $n<\omega$, $i<2$ we have
\[\forces_{\bbP_\lambda}\mbox{ `` }\name{\psi}_\zeta^i(n)=
\left\{\begin{array}{rl}
\name{\varphi}_\zeta(n)&\mbox{ if }\name{c}_\zeta(n)=i,\\
\emptyset        &\mbox{ if }\name{c}_\zeta(n)=1-i
\end{array}\right. \quad \mbox{ for each }n<\omega\mbox{ ''.}\]
Then $\forces_{\bbP_\lambda} [\name{\varphi}_\zeta] =[\name{\psi}^0_\zeta]
\cup [\name{\psi}^1_\zeta]$, so $p
\forces_{\bbP_\lambda}$``$[\name{\psi}^0_\zeta] \notin \name{\cI}_\zeta$ or
$[\name{\psi}^1_\zeta]\notin \name{\cI}_\zeta$ ''. Let
$\name{\tau}=\tau(\zeta,\name{c}_\zeta)$ be a canonical $\bbP_\lambda$--name
for a member of $\{0,1\}$ such that $p\forces$`` $[\name{\psi}^{
  \name{\tau}}_\zeta] \notin \name{\cI}_\zeta$ ''.

\begin{claim}
\label{clA}
For some sequence $\langle\alpha(\zeta),\name{c}_\zeta,\name{\psi}_\zeta:
\zeta<\mu\rangle$ we have
\begin{enumerate}
\item[(i)] $\langle\alpha(\zeta):\zeta<\mu\rangle\subseteq\lambda$ is
  strictly increasing with $\alpha_0\leq\alpha(0)$, and for each $\zeta<\mu$:
\item[(ii)] $\forces_{\bbP_{\alpha(\zeta)}}\name{\bbQ}_{\alpha(\zeta)}=\bbC$
  and $\name{c}_\zeta$ is the canonical $\bbP_{\alpha(\zeta)+1}$--name for
  the Cohen real in $\can$ added by $\name{\bbQ}_{\alpha(\zeta)}$,  and
  $\tau(\zeta,\name{c}_\zeta)$ is a $\bbP_{\alpha(\zeta+1)}$--name (for a
  member of $\{0,1\}$),
\item[(iii)] $\name{\psi}_\zeta$ is a $\bbP_{\alpha(\zeta+1)}$--name for an
  element of $\cS_f$ such that
\[\forces_{\bbP_{\alpha(\zeta+1)}}\mbox{ `` }\name{\psi}_\zeta(n)=
\left\{\begin{array}{rl}
\name{\varphi}_\zeta(n)&\mbox{ if }\name{c}_\zeta(n)=\tau(\zeta,
\name{c}_\zeta),\\
\emptyset  &\mbox{ if }\name{c}_\zeta(n)=1-\tau(\zeta,\name{c}_\zeta)
\end{array}\right. \quad \mbox{ for all }n<\omega\mbox{ '',}\]
\item[(iv)] if $\alpha^*=\sup(\alpha(\zeta):\zeta<\mu)$, then
  $\forces_{\bbP_{\alpha^*}} \name{\bbQ}_{\alpha^*}=\qforname$, where
  $\name{\bar{\psi}} = \langle\name{\psi}_\zeta:\zeta<\mu\rangle$.
\end{enumerate}
\end{claim}

\begin{proof}[Proof of the Claim]
We move back to $\bV$ and we use a density argument in
$\bbY^\lambda_\kappa$ above $P= \langle \bbP_\beta,\name{\bbQ}_\beta:
\beta<\alpha_0+1\rangle\in \bbY^\lambda_\kappa$. Let $\name{T}$ be a
$\bbY^\lambda_\kappa$--name for the function $\tau(\cdot,\cdot)$ introduced
(in $\bV[H]$) earlier. Note that if $\name{c}$ is a canonical
$\bbP_\gamma^*$--name, $Q^*=\langle\bbP_\beta^*,\name{\bbQ}^*_\beta:
\beta<\gamma\rangle\in\bbY^\lambda_\kappa$  and $\zeta<\mu$, then $Q^*$
forces that $(\zeta,\name{c})$ belongs to the domain of $\name{T}$ and
$\name{T}(\zeta,\name{c})$ is a $\bbY^\lambda_\kappa$--name for an element
of $\bV$.

Suppose that $Q=\langle\bbP_\beta' ,\name{\bbQ}_\beta':\beta< \alpha \rangle
\in \bbY^\lambda_\kappa$ is a condition stronger than $P$ (so $\alpha_0+1
\leq \alpha$ and $\name{\bbQ}_\beta'= \name{\bbQ}_\beta$ for
$\beta\leq\alpha_0$).

By induction on $\zeta<\mu$ we build a sequence $\langle Q_\zeta,
\alpha(\zeta),\name{c}_\zeta: \zeta<\mu\rangle$ such that
\begin{enumerate}
\item[$(\boxtimes)_1$] $Q_\zeta=\langle\bbP_\beta',\name{\bbQ}_\beta':
  \beta\leq\alpha(\zeta)\rangle\in\bbY^\lambda_\kappa$ (so
  $\lh(Q_\zeta)=\alpha(\zeta)+1<\lambda$),
\item[$(\boxtimes)_2$] for $\zeta<\vare<\mu$ we have
\[Q\leq _{\bbY^\lambda_\kappa} Q_\zeta\leq _{\bbY^\lambda_\kappa}
Q_\vare\quad \mbox{ and }\quad \alpha<\alpha(\zeta)< \alpha(\vare)<
\lambda,\]
\item[$(\boxtimes)_3$] $\forces_{\bbP_{\alpha(\zeta)}'} \name{\bbQ}_{\alpha
      (\zeta)}' =\bbC$ and $\name{c}_\zeta$ is the canonical
    $\bbP_{\alpha(\zeta)+1}'$--name for the Cohen real in $\can$ added by
    $\name{\bbQ}_{\alpha(\zeta)}'$,
\item[$(\boxtimes)_4$] $Q_{\zeta+1}$ decides the value of $\name{T}(\zeta,
  \name{c}_\zeta)$ and forces (in $\bbY^\lambda_\kappa$) that it is a
  $\bbP_{\alpha(\zeta+1)}'$--name.
\end{enumerate}
The construction is clearly possible by Lemma \ref{manyYiter}(2,4). Then
letting $\alpha^*=\sup(\alpha(\zeta):\zeta<\mu)$ we have that $Q_\mu=
\langle \bbP_\beta,\name{\bbQ}_\beta': \beta<\alpha^*\rangle\in
\bbY^\lambda_\kappa$  is a condition stronger than all $Q_\zeta$ (for
$\zeta<\mu$); remember \ref{manyYiter}(4) again. Moreover, if names
$\name{\psi}_\zeta$ are defined as in clause (iii), and $\name{\tau}_\zeta$
is the value forced to $\name{T}(\zeta,\name{c}_\zeta)$ by $Q_{\zeta+1}$
(see $(\boxtimes)_4$ above) and $\name{c}_\zeta$ are as described in
$(\boxtimes)_3$, then the assumptions of Lemma \ref{onemore} are
satisfied. Therefore $Q^*=Q_\mu\conc\langle \bbP_{\alpha^*},\bbQ^*_\mu
(\name{\bar{\psi}})\rangle\in \bbY^\lambda_\kappa$ is a condition stronger
than $Q$. This condition forces in $\bbY^\lambda_\kappa$ that $\langle
\alpha(\zeta),\name{c}_\zeta,\name{\psi}_\zeta: \zeta<\mu\rangle$ satisfies
the demands (i)--(iv).
\end{proof}

Let $\alpha(\zeta),\name{c}_\zeta,\name{\psi}_\zeta$ (for $\zeta<\mu$) and
$\alpha^*$ be in Claim \ref{clA}(i--iv), so in particular
$\forces_{\bbP_{\alpha^*}} \name{\bbQ}_{\alpha^*} = \qforname$. Let
$\name{\varsigma}$ be a $\bbP_{\alpha^*+1}$--name for the null slalom added
by $\name{\bbQ}_{\alpha^*}$ (see Definition \ref{forforsla}). It follows
from Proposition \ref{forprop} that
\[\forces_{\bbP_\lambda} \bigcup_{\zeta<\mu} [\name{\psi}_\zeta]\subseteq
[\name{\varsigma}]\in \cI_f,\]
and hence, by $(\circledast)_2$, $p\forces_{\bbP_\lambda} \big(\exists\vare<\mu \big) \big(
\bigcup\limits_{\zeta<\mu} [\name{\psi}_\zeta] \in \name{\cI}_\vare
\big)$. Pick $\vare^*<\mu$ and a condition $q\in \bbP_\lambda$ stronger than
$p$ such that  $q\forces_{\bbP_\lambda} \bigcup\limits_{\zeta<\mu} [\name{\psi}_\zeta]
\in \name{\cI}_{\vare^*}$. Then also  $q\forces [\name{\psi}_{\vare^*}]
\in \name{\cI}_{\vare^*}$ (remember $(\circledast)_1$), but this contradicts
$(\circledast)_2$.
\medskip

To argue that $\forces_{\bbP_\lambda}{\bf MA}_{{<}\kappa}({\rm ccc})$
note that every $\bbP_\lambda$--name $\name{\bbQ}$ for a ccc forcing notion
on some $\gamma^*<\kappa$ is actually a $\bbP_\alpha$--name for some
$\alpha<\lambda$. Therefore by the standard density argument in
$\bbY^\lambda_\kappa$, for some $\beta<\lambda$ we have $\forces_{\bbP_\beta}
\name{\bbQ}_\beta=\name{\bbQ}$ (remember \ref{manyYiter}(2)). Similarly we may
justify that $\forces_{\bbP_\lambda}{\bf MA}(\sigma\mbox{--centered})$.
\medskip

It follows from Lemma \ref{cruciallemma} that $\forces_{\bbP_\lambda}
\add(\cI_f)\leq \kappa$ and by $\forces_{\bbP_\lambda}{\bf MA}_{{<}\kappa}
({\rm ccc})$ we see that the equality is forced.
\end{proof}

\begin{corollary}
\label{mcor}
It is consistent that $\add(\cI_f)=\aleph_1$ and $\cofin^-(\cI_f)=
\cofin(\cI_f)=\aleph_2$.
\end{corollary}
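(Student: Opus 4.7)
The plan is to instantiate Theorem \ref{main} at $\kappa=\aleph_1$ and $\lambda=\aleph_2$. For this we want to start in a ground model $\bV$ satisfying GCH (or at least $2^{\aleph_1}=\aleph_2$, which yields $\lambda^\kappa=\aleph_2^{\aleph_1}=\aleph_2=\lambda$), so that the hypotheses of the theorem are met. Then $\kappa$ and $\lambda$ are regular uncountable with $\lambda^\kappa=\lambda$ and we may force with $\bbY^\lambda_\kappa$ to obtain $\bar{\bbQ}=\langle\bbP_\alpha,\name{\bbQ}_\alpha:\alpha<\lambda\rangle$, and then with $\bbP_\lambda=\lim(\bar{\bbQ})$ over the extension.

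In the resulting model, Theorem \ref{main} directly gives $\add(\cI_f)=\kappa=\aleph_1$ and $\cofin^-(\cI_f)\geq\kappa^+=\aleph_2$ as well as $2^{\aleph_0}=\lambda=\aleph_2$. The only thing left to verify is the matching upper bound $\cofin(\cI_f)\leq\aleph_2$. For this I would invoke the inequality $\cofin^-(\cI_f)\leq\cofin(\cI_f)\leq\cof(\cI_f)$ from Theorem \ref{BGclassic}, so it suffices to show that $\cof(\cI_f)\leq 2^{\aleph_0}=\aleph_2$ in the extension. But Definition \ref{theideal} and Observation \ref{basobs} tell us that $\{[\varphi]:\varphi\in\cS_f\}$ is already a basis for $\cI_f$, and $|\cS_f|\leq (2^{\aleph_0})^{\aleph_0}=2^{\aleph_0}=\aleph_2$ in the extension; so $\cof(\cI_f)\leq\aleph_2$.

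Combining the two bounds,
\[\aleph_2\leq\cofin^-(\cI_f)\leq\cofin(\cI_f)\leq\cof(\cI_f)\leq\aleph_2,\]
so $\cofin^-(\cI_f)=\cofin(\cI_f)=\aleph_2$, while $\add(\cI_f)=\aleph_1$. No step here is really a genuine obstacle, as all the hard work has been packaged into Theorem \ref{main}; the only point that requires a touch of care is ensuring the ground model satisfies $\lambda^\kappa=\lambda$ (handled by assuming GCH) and noting that the Borel-style description of $\cI_f$ automatically caps $\cof(\cI_f)$ by the continuum so that the lower bound $\cofin^-(\cI_f)\geq\aleph_2$ supplied by the theorem is in fact tight.
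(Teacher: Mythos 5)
Your derivation is correct and is exactly the (implicit) argument the authors intend: the corollary is stated without proof, but it follows immediately from Theorem \ref{main} at $\kappa=\aleph_1$, $\lambda=\aleph_2$ over a GCH ground model, combined with the observation that $\cofin(\cI_f)\le\cof(\cI_f)\le|\cS_f|\le 2^{\aleph_0}=\aleph_2$ in the extension. There is no gap and no genuinely different route here.
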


\section{Open problems}

Can we get a result parallel to Corollary \ref{mcor} for the null and/or
meager ideals? Or even better:

\begin{problem}
Let $\cI$ be either the meager ideal $\cM$ or the null ideal $\cN$. Is
it consistent that
\[\add(\cI)<\cofin(\cI)<\cof(\cI)\quad\mbox{ ?}\]
\end{problem}

The method used in the proof of \ref{main}, \ref{mcor} gives the consistency
of $\add(\cI_f)\leq\kappa\ \&\ \kappa^+\leq \cofin^-(\cI_f)$. Can the gap be
bigger?

\begin{problem}
Is it consistent that   $\add(\cI_f)=\aleph_\alpha<\aleph_{\alpha+\omega}
\leq \cofin^-(\cI_f)$ ?
\end{problem}

The cardinal invariant $\cofin$ introduced by Borodulin--Nadzieja and
G{\l}ab has several natural relatives (or variants), some were listed in
Definition \ref{coefficients}. Are those coefficients distinct or they are
equivalent within the realm of nice $\sigma$--ideals?

\begin{problem}
Is it consistent that for some Borel $\sigma$--ideal $\cI$ on $\can$ we have
$\cofin^*(\cI)<\cofin^-(\cI)$ ? Or $\cofin^-(\cI)<\cofin(\cI)$ ? Or
$\cofin(\cI)<\cofin^+(\cI)$ ?
\end{problem}


\end{document}